\title{Dimension drop of harmonic measure for some finite range random walks on Fuchsian Schottky groups}
\author{Ernesto García \and Pablo Lessa}
\newtheorem{maintheorem}{Theorem}
\newtheorem{theorem}{Theorem}
\newtheorem{lemma}{Lemma}
\newtheorem{proposition}{Proposition}
\newtheorem{conjecture}{Conjecture}
\newtheorem{example}{Example}
\newtheorem{definition}{Definition}
\DeclareMathOperator{\dist}{dist}
\newcommand{\Pof}[1]{\mathbb{P}\left(#1\right)}
\newcommand{\R}{\mathbb{R}}
\renewcommand{\H}{\mathbb{H}}
\newcommand{\D}{\mathbb{D}}
\newcommand{\C}{\mathbb{C}}
\renewcommand{\P}{\mathbb{P}}
\newcommand{\Z}{\mathbb{Z}}
\DeclareMathOperator{\supp}{supp}
\DeclareMathOperator{\prefix}{Prefix}
\DeclareMathOperator{\SL}{SL}
\begin{document}
\maketitle

\begin{abstract}
 We prove that the harmonic measures of certain finite range random walks on Fuchsian Schottky groups, have dimension strictly smaller than the Hausdorff dimension of the corresponding limit set.
\end{abstract}

\section{Introduction}

Since early work of Furstenberg (see \cite{furstenberg} and \cite{ledrappierpositive}) it is known that the norm of a typical product of i.i.d. random matrices in \(\SL_2(\R)\) will grow exponentially as the number of factors increases under mild assumptions on the common distribution \(\mu\) of the matrices. 

Furthermore, by the Oseledets' theorem \cite{oseledets}, there is almost surely a well-defined unique one-dimensional subspace of vectors whose norm does not grow at the same rate as the matrix product does.    

These two facts can be interpreted, using the action of these matrices on the hyperbolic plane \(\H^2\), as stating that typical random walks have a well-defined asymptotic speed and converge to a particular direction in the ideal boundary (see \cite{kaimanovich} and \cite{karlsson-margulis}).

In this article, we are interested in the distribution \(\nu\) of this point on the ideal boundary, which is the unique \(\mu\)-stationary measure on the boundary.   

A well known conjecture (we will review the literature below) is that when \(\mu\) has finite support, and this support generates a discrete group, the stationary measure \(\nu\) should be singular with respect to Lebesgue measure.

One of the points of the present work is to extend the range of the conjecture by allowing stationary measures whose support is known a-priori to have dimension strictly less than one, and therefore are known to be singular.  We conjecture, for \(\mu\) with finite support generating any non-elementary discrete group, that the stationary measure \(\nu\) (which is known to be exact dimensional, see below) has dimension strictly smaller than that of its support.

We give some evidence for the conjecture by proving it for finite support measures on two generator Schottky groups satisfying a (rather strong) additional hypothesis.   We believe that the same methods could yield an analogous result for Schottky groups in \(d \ge 2\) generators.  However, the removal of the condition on the finite support measure needed for our proof seems to require new ideas.

 \subsection{A dimension drop conjecture}
 
To make the statements above precise, let \(G\) be the group of isometries of the hyperbolic plane \(\H^2\) and \(\Gamma < G\) be a non-elementary discrete subgroup (we refer to \cite{anderson} and \cite{dalbo} for basic hyperbolic geometry).

Let \(\Lambda\) be the limit set of \(\Gamma\), which is a closed \(\Gamma\) invariant subset of the visual boundary \(\partial \H^2\) of \(\H^2\).   We denote by \(\dim(\Lambda)\) the Hausdorff dimension of \(\Lambda\) with respect to the visual distance based at any point \(o \in \H^2\).

For any finitely supported probability measure \(\mu\) whose support generates \(\Gamma\) as a semi-group there exists a unique \(\mu\)-stationary probability \(\nu\) on \(\partial \H^2\).  That is, \(\nu\) satisfies
\[\nu = \sum\limits_{g \in \Gamma}\mu(g)g_*\nu,\]
where \(g_*\nu\) denotes the push-forward of \(\nu\) by \(g\).   Since any \(\Gamma\)-invariant compact set admits a \(\mu\)-stationary measure, it follows by uniqueness that the support of \(\nu\) must be contained in \(\Lambda\).

The measure \(\nu\) is also the harmonic measure of any random walk of the form \(x_n = g_1\cdots g_n o\) where \(o \in \H^2\) and \(g_1,\ldots, g_n,\ldots\) are i.i.d. random elements with common distribution \(\mu\).  By this we mean that it is the distribution of the random limit point \(x_\infty = \lim\limits_{n \to +\infty} x_n\).

Furthermore, \(\nu\) is exact dimensional (see \cite{tanaka}, and also \cite{leprince}, \cite{hochman-solomyak}, and \cite{ledrappier-dimension}), i.e. there exists a non-negative real number \(\dim(\nu)\) such that
\[\dim(\nu) = \lim\limits_{r \downarrow 0}\frac{\log (\nu(B(\xi,r)))}{\log(r)},\]
for \(\nu\)-a.e. \(\xi \in \partial\H^2\), where \(B(\xi,r)\) denotes the ball of radius \(r\) centered at \(\xi\) with respect to the visual metric based at some point \(o \in \H^2\).  

The dimension \(\dim(\nu)\) can be characterized as the infimum of the Hausdorff dimensions of Borel sets with full \(\nu\)-measure (see for example \cite[Proposition 2.1]{young}).  Since \(\nu\) is supported on \(\Lambda\) one has \(\dim(\nu) \le \dim(\Lambda)\).  We conjecture that equality is never attained for finitely supported \(\mu\).

\begin{conjecture}[Dimension drop conjecture]\label{mainconjecture}
Let \(\mu\) be a finitely supported probability on \(G\) whose support generates (as a semi-group) a discrete non-elementary subgroup \(\Gamma\), and let \(\nu\) be the unique \(\mu\)-stationary measure on the limit set \(\Lambda\) of \(\Gamma\).   Then \(\dim(\nu) < \dim(\Lambda)\).
\end{conjecture}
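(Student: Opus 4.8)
The plan is to reduce the strict inequality to a rigidity statement and then to defeat that rigidity using the finiteness of the support. Since \(\dim(\Lambda)=\delta\), the critical exponent of \(\Gamma\), and this dimension is realized by the Patterson--Sullivan measure \(\nu_\delta\) (the unique \(\delta\)-conformal measure on \(\Lambda\)), the natural first step is to invoke the entropy--drift formula \(\dim(\nu)=h/\ell\), where \(h\) is the asymptotic (Avez) entropy of the walk and \(\ell\) its linear drift in the visual metric; this is precisely the exact-dimensionality statement made quantitative in \cite{tanaka} and \cite{ledrappier-dimension}. With this in hand the inequality \(\dim(\nu)\le\dim(\Lambda)\) becomes the fundamental inequality \(h\le\delta\,\ell\), and \Cref{mainconjecture} is equivalent to the assertion that this inequality is always strict for finitely supported \(\mu\).

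First I would characterize the equality case. Equality \(h=\delta\ell\) should force the harmonic measure \(\nu\) to be equivalent to the conformal measure \(\nu_\delta\): heuristically, \(\dim(\nu)=\dim(\Lambda)\) means \(\nu\) cannot be carried by any Borel set of smaller dimension, which together with exact dimensionality pins \(\nu\) to the conformal density. To make this precise I would compare the two cocycles governing the two measures. The stationarity equation \(\nu=\sum_g\mu(g)\,g_*\nu\) shows that \(\log \tfrac{dg_*\nu}{d\nu}\) is the \emph{Green} (Busemann) cocycle of the walk, while \(\delta\)-conformality gives \(\log\tfrac{dg_*\nu_\delta}{d\nu_\delta}=\delta\,\beta(g)\) in terms of the geometric Busemann cocycle \(\beta\). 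Equivalence of the measures then forces these cocycles to be cohomologous, i.e. the Green metric \(d_G(o,go)=-\log G(o,go)\) to be roughly proportional to \(\delta\) times the hyperbolic metric up to a bounded coboundary (this is the Schottky analogue of the equality analysis of Blachère--Haïssinsky--Mathieu and Gouëzel--Mathéus--Maucourant).

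To derive a contradiction I would exploit the Schottky structure to code \(\Lambda\) by a subshift of finite type, so that \(\nu_\delta\) becomes the Gibbs measure of a geometric potential \(-\delta\,\varphi\) and \(\nu\) becomes (the projection of) the Markov/Gibbs measure of a ``harmonic'' potential \(\psi\) attached to the walk. Cohomology of the cocycles is then equivalent to cohomology of \(\psi\) and \(-\delta\varphi\) as potentials on the shift, which by the Livšic theorem reduces to equality of their periods over all closed orbits. Each closed orbit corresponds to a cyclically reduced word \(w\in\Gamma\) (a primitive hyperbolic conjugacy class), whose geometric period is \(\delta\) times the translation length \(\ell_{\H}(w)\) and whose harmonic period is \(-\log\) of a ratio of Green functions along the axis of \(w\). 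The heart of the argument is to show that for finitely supported \(\mu\) these two period functions cannot agree on every \(w\): the finite range hypothesis constrains the Green function to decay combinatorially, and I would produce two conjugacy classes (e.g. a single generator versus a suitable product) whose geometric and harmonic length ratios differ, equivalently showing that the Manhattan curve relating the two metrics is strictly convex and hence not the straight line equality would demand.

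The main obstacle is precisely this last separation: ruling out the exact, measure-zero coincidence in which the Green metric happens to be conformal to the hyperbolic metric. In full generality this is the substance of the conjecture and seems to require genuinely new input; what makes the finite-range Schottky case tractable is that the thermodynamic formalism renders both period functions real-analytic in the relevant parameters, so that strict convexity of the pressure (or, equivalently, a large-deviation separation of the two length spectra) furnishes the quantitative gap \(\delta\ell-h>0\). I expect the bulk of the technical effort to lie in coding a \emph{general} finite-range walk by a finite-state object compatible with the Schottky subshift, and in converting strict convexity of the pressure into the desired strict inequality.
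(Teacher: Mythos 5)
You are attempting to prove Conjecture~\ref{mainconjecture}, but the paper does not prove this statement: it is explicitly left open, and the paper only establishes the special case Theorem~\ref{maintheorem}, for Schottky groups in two generators under the restrictive hypothesis that \(a\) occurs as the last letter of exactly one element of \(\prefix(\supp(\mu))\). Your proposal is likewise not a proof but a programme, and you concede this yourself: the pivotal step --- showing that for \emph{every} finitely supported \(\mu\) the harmonic period function cannot coincide with \(\delta\) times the geometric length function over all conjugacy classes --- is exactly where you write that the general case ``seems to require genuinely new input.'' That is the entire content of the conjecture; everything before it (entropy--drift formula, the fundamental inequality \(h \le \delta\ell\), the equality analysis forcing cohomology of the Green and Busemann cocycles à la Blachère--Haïssinsky--Mathieu) is a known reduction, and a reduction of an open problem to an unproved separation statement is a genuine gap, not a proof. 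A secondary soft spot: your claim that \(\dim(\nu) = \dim(\Lambda)\) pins \(\nu\) to the conformal density needs \(\nu\) to be (equivalent to) a Gibbs measure on the coding shift, which for a general finite-range walk is itself nontrivial --- the paper devotes its Section 2 (Theorem~\ref{reducedhiddenmarkovtheorem}) to constructing a hidden Markov lift precisely to make this workable, and then invokes Ledrappier plus uniqueness of the equilibrium state to get the dichotomy of Theorem~\ref{dimdropalternativetheorem}.

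It is worth noting how the paper's partial result evades the obstacle you ran into, since your Liv\v{s}ic/Manhattan-curve framing points in a harder direction. You propose to \emph{compute} harmonic periods (ratios of Green functions along axes) and exhibit two conjugacy classes where they disagree with the geometric ones; no general mechanism for this is available, because the harmonic periods depend on \(\mu\) in an uncontrolled way. The paper instead shows that in the equality case the Markov structure forces an exact \emph{multiplicativity} of \(\nu_{\Sigma^\omega_R}\) across occurrences of the letter \(a\) --- this is where the single-prefix hypothesis enters, since it guarantees a unique state \(s_a\) through which every \(a\)-path must pass, making the path sums in Lemma~\ref{additivelemma} factor --- and hence exact additivity \(\ell(\rho(aw_1aw_2)) = \ell(\rho(aw_1)) + \ell(\rho(aw_2))\) of hyperbolic translation lengths. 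That identity is then refuted by pure hyperbolic geometry (intersecting axes give strict subadditivity, disjoint axes translating toward a common half-plane give strict superadditivity, via Furman), a contradiction entirely independent of \(\mu\). In short: the paper trades your \(\mu\)-dependent period computation for a \(\mu\)-independent geometric impossibility, at the price of the restrictive hypothesis of Theorem~\ref{maintheorem}; removing that hypothesis, let alone proving the conjecture for general \(\Gamma\), remains open.
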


The conjecture is motivated by the observation that, in other contexts, the harmonic measure of a random walk `built from local information' has a dimension drop with respect to the natural geometric measure on the boundary.

Two contexts where analogous statements have been proven are the harmonic measure of a Jordan domain bounded by a curve with Hausdorff dimension strictly larger than \(1\) (see \cite{makarov}), and the harmonic measure for the simple random walk on a Galton-Watson tree (see \cite{lyons-pemantle-peres}).

Another motivation for Conjecture \ref{mainconjecture} is to extend the scope of the following well known conjecture of singularity with respect to visual measure on the boundary (see \cite[Conjecture 1.21]{dkn2009}, and \cite{kl2011}) to the case where \(\Lambda\) is not the entire visual boundary, allowing it to be investigated in a larger family of examples.

\begin{conjecture}[Singularity conjecture]\label{klconjecture}
If \(\mu\) is a finite support probability whose support generates (as a semi-group) a Fuchsian group \(\Gamma\) then the unique \(\mu\) stationary measure \(\nu\) on \(\partial \H^2\) is singular with respect to the class of visual measures.
\end{conjecture}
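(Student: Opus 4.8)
The plan is to deduce singularity from a dimension comparison, thereby reducing it to the dimension-drop phenomenon of \Cref{mainconjecture}. First I would dispose of the case where \(\Gamma\) is of the second kind, i.e. \(\Lambda \neq \partial\H^2\). Since \(\mu\) is finitely supported, \(\Gamma\) is finitely generated, hence geometrically finite; when \(\Lambda \neq \partial\H^2\) its limit set is a Cantor set of Hausdorff dimension equal to the critical exponent \(\delta < 1\), and in particular \(\Lambda\) has zero visual measure. As \(\nu\) is supported on \(\Lambda\), this already yields \(\nu \perp m\) for every visual measure \(m\), and the conjecture holds trivially. The substance of the statement is therefore concentrated in the first-kind case \(\Lambda = \partial\H^2\), where \(\dim(\Lambda) = 1\) and the visual measures are fully supported on the circle with exact dimension \(1\).

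In that case I would argue that singularity is \emph{equivalent} to a strict dimension drop. If \(\dim(\nu) < 1\), then by the characterization of \(\dim(\nu)\) as the infimum of the Hausdorff dimensions of Borel sets of full \(\nu\)-measure (\cite[Proposition 2.1]{young}) there is a Borel set \(B\) with \(\nu(B) = 1\) and \(\dim_H(B) < 1\); any such set carries zero one-dimensional Hausdorff measure and is thus Lebesgue-null, so \(m(B) = 0\) for every visual measure \(m\) and \(\nu \perp m\) follows. Hence the singularity conjecture for first-kind groups is implied by \Cref{mainconjecture}, which in this setting reads \(\dim(\nu) < \dim(\Lambda) = 1\). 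This is precisely the sense in which \Cref{mainconjecture} \emph{extends} the present conjecture beyond the full-boundary regime.

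It remains to establish the strict inequality \(\dim(\nu) < 1\). Here I would invoke the exact-dimensional formula \(\dim(\nu) = h_\mu / \ell_\mu\) (in a normalization of the visual metric where the volume entropy of \(\H^2\) equals \(1\)), with \(h_\mu\) the asymptotic entropy and \(\ell_\mu\) the hyperbolic rate of escape of the walk (\cite{tanaka}); singularity then amounts to strictness in the fundamental inequality \(h_\mu \le \ell_\mu\), whose equality case is known to force \(\nu\) to be equivalent to a visual measure (\cite{kl2011}). The \textbf{main obstacle} is exactly this strictness for an arbitrary finitely supported \(\mu\): equality would make the Green metric of the walk essentially proportional to the hyperbolic metric along the orbit, a rigidity one expects to fail because \(\nu\) is assembled from purely local finite-range transition data while the visual measure is geometrically smooth — the mechanism underlying both Makarov's theorem (\cite{makarov}) and the Galton--Watson result (\cite{lyons-pemantle-peres}). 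To make this precise for the family in the title I would code the walk symbolically through the Schottky structure, realize \(\nu\) as the image of a Gibbs--Markov measure on the coding space, and compare its dimension with that of the Patterson--Sullivan/visual measure via the thermodynamic variational principle, extracting the strict drop from strict convexity of pressure whenever the local data is genuinely non-constant.
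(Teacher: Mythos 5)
The statement you are attempting is Conjecture~\ref{klconjecture}: it is an \emph{open conjecture}, and the paper offers no proof of it. The paper's actual theorem (Theorem~\ref{maintheorem}) establishes dimension drop for certain walks on Schottky groups, which are of the \emph{second kind} --- precisely the regime where, as your own first paragraph observes, singularity is trivial because \(\Lambda\) is Lebesgue-null. Your reduction has two sound, easy pieces: (i) in the second-kind case, finite generation gives geometric finiteness, \(\delta < 1\), and a Lebesgue-null limit set carrying \(\nu\); (ii) in the first-kind case, \(\dim(\nu) < 1\) implies singularity via the characterization of \(\dim(\nu)\) as the infimum of Hausdorff dimensions of full-measure Borel sets (\cite{young}). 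Both match the paper's framing of how Conjecture~\ref{mainconjecture} extends Conjecture~\ref{klconjecture}. But the entire content of the conjecture --- strictness of \(h_\mu \le \ell_\mu\) for \emph{every} finitely supported \(\mu\) on a first-kind (hence, being finitely generated, lattice) Fuchsian group --- is exactly what you flag as the ``main obstacle'' and never establish. For non-cocompact lattices this is indeed known (\cite{guivarch-lejan}, \cite{bhm}, \cite{dkn}, \cite{gadre-maher-tiozzo}), but in the cocompact case only special families have been treated (\cite{carrasco-lessa-paquette}, \cite{kosenko}, \cite{tk2020}, \cite{aimpaper}). Your closing plan --- ``code the walk symbolically through the Schottky structure'' and compare Gibbs measures via thermodynamic formalism --- cannot close this gap: a cocompact Fuchsian group admits no Schottky coding, and no such symbolic Markov/Gibbs realization of \(\nu\) is available for an arbitrary finite-range \(\mu\) there; in the Schottky case your plan merely gestures at the paper's own Theorem~\ref{maintheorem}, where singularity was already trivial and the new content is the strictly stronger statement \(\dim(\nu) < \dim(\Lambda) < 1\).

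Two secondary inaccuracies. First, your claimed \emph{equivalence} ``singular \(\iff \dim(\nu) < 1\)'' rests on the equality-case rigidity \(h_\mu = \ell_\mu \Rightarrow \nu\) equivalent to a visual measure; that is a theorem of Blach\`ere--Ha\"issinsky--Mathieu type requiring co-compactness (hyperbolicity of the Green metric), not something available for all first-kind groups, and \cite{kl2011} is not the right reference for it. In any case the equivalence is not needed: only the implication \(\dim(\nu) < 1 \Rightarrow \nu \perp m\) enters your argument. Second, even granting all your reductions, what remains is not a technical verification but an open problem strictly containing the known partial results; a proof proposal whose final step is ``extract the strict drop from strict convexity of pressure whenever the local data is genuinely non-constant'' does not identify any mechanism that would work for a general finitely supported \(\mu\) on a cocompact group, which is precisely why both Conjecture~\ref{klconjecture} and Conjecture~\ref{mainconjecture} remain open.
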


We notice that Conjecture \ref{klconjecture} has been studied in other contexts such as, higher dimensional hyperbolic space \cite{randecker-tiozzo}, Teichmüller space (see \cite{gadre} and \cite{gadre-maher-tiozzo-teich}), and higher rank semi-simple Lie groups \cite{kl2011}.  In all these contexts analogous statements to Conjecture \ref{mainconjecture} seem to warrant further investigation.

Conjecture \ref{klconjecture} is known to be true when \(\Gamma\) is not co-compact but has finite co-volume (see \cite{guivarch-lejan}, \cite{bhm}, \cite{dkn}, \cite{gadre-maher-tiozzo}).  

Some progress has been made in the case when \(\Gamma\) is co-compact.  In \cite{carrasco-lessa-paquette} and \cite{kosenko} the conjecture is verified for nearest neighbor random walks on tilings by regular polygons.  A general result for symmetric random walks on the fundamental group of the surface of genus two is obtained in \cite{tk2020}).  For any finite support probability measure on the fundamental group of a closed surface, dimension drop for the harmonic measure associated to discrete and faithful representations of the group outside of a compact subset of Teichmüller space was established in \cite{aimpaper}.

Dimension drop of harmonic measure was established for finite range random walks on discrete groups \(\Gamma\) which are not virtually free and whose boundary is endowed with a distance coming from a word metric in \cite[Theorem 1.1]{gmm}.  However, their result does not apply in the current setting (see \cite[Remark 1.1]{gmm}).

If \(\mu\) is not assumed to be finitely supported then Conjecture \ref{mainconjecture} is false.   In the co-compact case a measure \(\mu\) whose stationary measure is continuous can be constructed via the Furtenberg-Lyons-Sullivan discretization procedure (see \cite{lyons-sullivan}) or by more general results of Connell and Muchnik \cite{connell-muchnik}.   In the convex co-compact case it has been shown that the probability can be chosen with finite exponential moment (see \cite[Appendix]{li}).

Conjecture \ref{klconjecture} is false if the support of \(\mu\) generates a dense group (see \cite{bourgain} and \cite{barany-pollicott-simon}).

Conjecture \ref{mainconjecture} should be contrasted with the classical case of self-similar sets defined by iterated function systems satisfying the open set condition.  In that case a probability \(\mu\) on the generators of the iterated function system can be found such that the \(\mu\)-stationary measure realizes the Hausdorff dimension of the self-similar set (see \cite{moran} and \cite{hutchinson}).

\subsection{A partial result for Schottky groups}

The purpose of this article is to prove some special cases of Conjecture \ref{mainconjecture} when the discrete group \(\Gamma\) is a Schottky group in two generators.

To state our result we will need to discuss the expression of elements of the group \(\Gamma\) in terms of words in a free generator.  For this purpose we fix the finite alphabet \(\Sigma = \lbrace a,b,a^{-1},b^{-1}\rbrace\) and let \(\Sigma^*\) denote the set of finite words on this alphabet, including the empty word which we denote by \(\varepsilon\).

We fix the rewriting rules \(R = \lbrace (aa^{-1},\varepsilon),(a^{-1}a,\varepsilon), (bb^{-1},\varepsilon),(b^{-1}b,\varepsilon)\rbrace\) which are the standard presentation for the free group with generators \(\lbrace a,b\rbrace\). Each word \(w \in \Sigma^*\) has a unique reduced form \(\overline{w}\) which is obtained by successively replacing any occurrence of the left-hand side of a pair in \(R\) with the empty word (though we will not need any deep results we refer to \cite{rewriting} as a general reference on term rewriting systems).

The set of reduced words \(\Sigma^*_R\) is a free group with generators \(a,b\) when endowed with the product \(w\cdot w' = \overline{ww'}\) (where \(ww'\) is the concatenation of \(w\) and \(w'\)). A \textit{prefix} of a word in $\Sigma^*_R$ is a subword obtained by deleting rightmost letters (see Section 2 for a precise definition).

We now consider a probability measure \(\mu\) on \(\Sigma^*_R\) whose support \(\supp(\mu)\) is finite and generates \(\Sigma^*_R\) as a semi-group. Denote the
set of prefixes of the words in \(\supp(\mu)\) by \(\prefix(\supp(\mu))\).

Suppose \(\rho:\Sigma^*_R \to \Gamma\) is an isomorphism from \(\Sigma^*_R\) to a Schottky group in \(G\). By this we mean that there exists family of half-planes \(\lbrace H_x: x \in \Sigma\rbrace\) whose closures in \(\H^2 \cup \partial \H^2\) are pairwise disjoint and such that \(\rho(x)H_y \subset H_x\) for all \(x\) and all \(y \in \Sigma \setminus \lbrace x^{-1}\rbrace\).  See Figure \ref{schottkyfigure}.

We let \(\nu\) denote the \(\rho_*\mu\)-stationary measure on the limit set \(\Lambda\) of \(\Gamma\). 

Our main theorem is the following:
\begin{maintheorem}[Dimension drop for some finite range random walks on Schottky groups]\label{maintheorem}
In the setting above suppose that \(a\) occurs as the last letter of exactly one element of \(\text{Prefix}(\supp(\mu))\). Then \(\dim(\nu) < \dim(\Lambda)\).
\end{maintheorem}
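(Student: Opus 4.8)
The plan is to pass to a symbolic model, reduce the statement to a non-coincidence of two measures on the boundary, and then extract a contradiction from the unique-prefix hypothesis via a first-passage decomposition. First I would use the ping-pong/Schottky structure to identify \(\Lambda\) with the space \(\partial\Sigma^*_R\) of infinite reduced words, obtaining a bi-Hölder homeomorphism under which the visual metric based at \(o\) makes the cylinder \([x_1\cdots x_n]\) have diameter comparable to \(\exp(-\sum_{k=1}^{n}\psi(\sigma^k\xi))\), where \(\psi\) is a Hölder \emph{geometric potential} (the logarithmic contraction rate of the composition of the generators \(\rho(x_i)\) at the relevant boundary point) and \(\sigma\) is the shift. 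Transporting \(\nu\) to \(\partial\Sigma^*_R\), Bowen's formula gives \(\dim(\Lambda)=\delta\), the unique zero of the pressure \(t\mapsto P(-t\psi)\), realized by the equilibrium (Patterson--Sullivan) measure \(m\) for \(-\delta\psi\). By the exact dimensionality recalled above, together with the identification of the entropy and Lyapunov exponent of \(\nu\) with those of the shift-invariant measure \(\overline{\nu}\) built from \(\nu\) and the reflected walk on the natural extension, one obtains \(\dim(\nu)=h(\overline{\nu})/\int\psi\,d\overline{\nu}\), with the same potential \(\psi\) in both formulas so that the normalizations agree.

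By the variational principle, \(\delta=\sup_{m'} h(m')/\int\psi\,dm'\), and since the reduced-word shift is a mixing subshift of finite type and \(\psi\) is Hölder, the supremum is attained only at the unique equilibrium state \(m\). Hence \(\dim(\nu)\le\dim(\Lambda)\), and, crucially, \emph{equality holds if and only if \(\overline{\nu}=m\)}, that is, if and only if \(\nu\) is the Patterson--Sullivan measure. Everything therefore reduces to proving \(\nu\ne m\). Because \(m\) is Gibbs for \(-\delta\psi\), its one-step conditionals \(m[x_1\cdots x_n x]/m[x_1\cdots x_n]\) are comparable, uniformly in \(n\) and the tail, to \(e^{-\delta\psi}\), and are strictly positive and bounded away from degeneracy; it thus suffices to exhibit a single conditional probability of \(\nu\) that the structure of the walk forces to violate this Gibbs relation.

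This is where the hypothesis enters and where the hard part lies. I would analyze \(\nu\) through the first-passage (loop) decomposition of the walk on \(\Sigma^*_R\): the sequence of letters that become permanently confirmed in the limit word \(x_\infty\) is governed by first-passage probabilities, and the \(\nu\)-conditional probability of confirming a fresh letter \(a\) after a given confirmed prefix is expressible through these quantities. The assumption that \(a\) is the last letter of \emph{exactly one} element of \(\prefix(\supp(\mu))\) collapses this expression to a single term: the only configuration of increments that can produce a freshly confirmed \(a\) passes through one fixed prefix \(p=ua\). This rigidity pins the relevant \(\nu\)-conditional to an explicit value and, more usefully, forces it to be essentially independent of the tail along the \(a\)-branch, whereas the Gibbs value \(e^{-\delta\psi}\) genuinely fluctuates with the tail because \(\rho(a)\) is a nontrivial hyperbolic isometry whose boundary derivative is non-constant on \(\Lambda\). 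Matching a rigid quantity to a genuinely varying one on a set of positive measure is impossible, which yields \(\nu\ne m\) and hence the strict drop.

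The main obstacle is making this last step precise: both the harmonic and the Patterson--Sullivan conditionals are complicated objects, and all the leverage is concentrated in the word ``exactly,'' which removes the averaging over competing configurations that would otherwise allow \(\nu\) to imitate a Gibbs measure. Executing the argument requires (i) a clean first-passage/loop-graph formalism identifying the relevant \(\nu\)-conditional with a single first-passage probability, and (ii) a quantitative non-coincidence statement showing that the resulting value cannot reproduce the genuine tail-dependence of \(e^{-\delta\psi}\); here the discreteness of \(\Gamma\) and the finiteness of \(\supp(\mu)\) are needed to exclude the degenerate algebraic coincidences that could otherwise permit equality.
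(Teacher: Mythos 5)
Your first half tracks the paper: coding the limit set by reduced words, Bowen's formula, and the fact that \(\dim(\nu)=\dim(\Lambda)\) forces \(\nu\) to coincide with the unique equilibrium state \(m\) for \(-\delta\psi\) (in the paper this goes through Ledrappier's theorem that \(\varphi\,\nu_{\Sigma^\omega_R}\) is Gibbs for a continuous positive density \(\varphi\), plus uniqueness of the equilibrium measure). You also correctly locate where the hypothesis acts: since \(a\) is the last letter of exactly one element \(s_a\) of \(\prefix(\supp(\mu))\), the hidden Markov lift of \(\nu_{\Sigma^\omega_R}\) is pinned to the state \(s_a\) at every confirmed occurrence of \(a\), which is a genuine renewal property of the harmonic measure.

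The gap is the final non-coincidence step, and your proposed mechanism (a ``rigid'' harmonic conditional versus a ``tail-fluctuating'' Gibbs conditional) does not close it. The identification in the equality case is only \(\varphi\,\nu_{\Sigma^\omega_R}=m_\delta\) with \(\varphi\) a continuous density, and the Gibbs property itself holds only up to uniform multiplicative constants; consequently the conditionals of \(\nu\) are those of \(m_\delta\) corrected by ratios of \(\varphi\), and both sides vary with the prefix, so no pointwise mismatch between a constant and a fluctuating quantity is forced. What is needed is a quantity invariant under bounded densities and Gibbs constants, and this is exactly the device the paper uses: for cyclically reduced \(aw\), the estimates of Theorem \ref{dimdropalternativetheorem} give the exact exponential growth rate \(\lim_n \nu_{\Sigma^\omega_R}([(aw)^n])^{1/n}=\exp(-\delta\,\ell(\rho(aw)))\) (the \(n\)-th root kills \(\varphi\) and all constants), while the pinned state \(s_a\) factorizes \(\nu_{\Sigma^\omega_R}([(aw)^n])\) as \(f(s_a)\) times an \((n-1)\)-st power of a fixed path sum, yielding the exact additivity \(\ell(\rho(aw_1aw_2))=\ell(\rho(aw_1))+\ell(\rho(aw_2))\) for all cyclically reduced \(aw_1,aw_2\). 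The contradiction is then purely geometric, with \(w=aa\), \(w'=ab\): if the axes of \(\rho(w)\) and \(\rho(w')\) intersect then \(\ell(\rho(ww'))<\ell(\rho(w))+\ell(\rho(w'))\), and if they are disjoint then, both translations pointing into \(H_a\), one gets the strict reverse inequality (cf.\ Furman), so additivity fails either way. Without this periodic-word/translation-length argument (or an equivalent density-insensitive invariant), your step (ii) remains an unproven assertion, as you yourself flag, and the heuristic as stated would not survive the density correction.
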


We remark that the techniques we use to prove Theorem \ref{maintheorem} could be used to obtain an analogous result on free groups of larger rank.  However, removing the hypothesis on the number of occurrences of the generator \(a\) in the support seems to require some new ideas.

The theorem applies in particular to nearest neighbor random walks, i.e. when \(\supp(\mu) = \Sigma\).  Examples with arbitrarily large cardinality may be obtained defining \(\supp(\mu)\) as the set of all non-empty reduced words with length at most \(n\), which either do not contain the letter \(a\), or are of the form \(aw\) where \(w\) does not contain \(a\).   For \(n = 2\) this yields
\[\supp(\mu) = \lbrace b,a^{-1},b^{-1},b^2, ba^{-1},a^{-1}b,a^{-2},a^{-1}b^{-1}, b^{-1}a^{-1}, b^{-2}\rbrace \cup \lbrace a,ab,ab^{-1}\rbrace.\]

\begin{figure}[H]
    \centering
    \includegraphics[width=\textwidth]{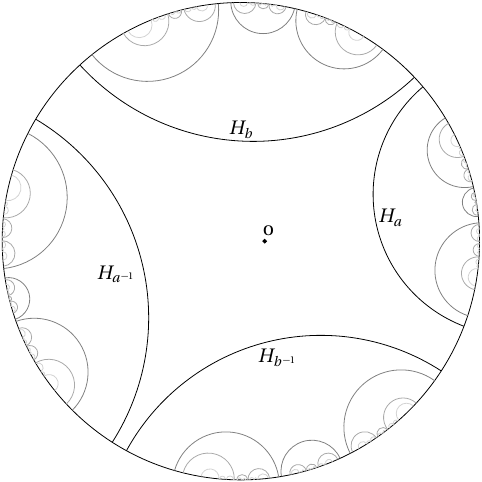}
    \caption{\label{schottkyfigure}Half-planes \(H_x\) for \(x \in \Sigma\) associated to the Schottky group \(\rho(\Sigma^*_R)\), in the Poincaré disk model of \(\H^2\).}
\end{figure}

\section{Hidden Markov property of harmonic measure}

We let \(\Sigma^\omega\) denote the set of infinite words.  We use \(w[i]\) for the \(i-\)th letter of a word (infinite or finite) so \(w = w[1]w[2]\cdots\), and set \(w[i:j] = w[i]w[i+1]\cdots w[j]\). For any $n\geq 1$, $w[1:n]$ is called a \textit{prefix} of $w$, and we also consider the empty word $\varepsilon$ and $w$ itself as prefixes. Let \(\Sigma^\omega_R\) denote the subset of infinite reduced words (i.e. every prefix is reduced).

The set \(\Sigma^\infty = \Sigma^* \cup  \Sigma^\omega\) is a compact Polish space under letter-wise convergence, i.e. \(w_n \to w\) if and only if for each \(i \in \lbrace 1,\ldots, |w|\rbrace\) one has \(w_n[i] = w[i]\) for all \(n\) large enough.  The subset \(\Sigma^\infty_R = \Sigma^*_R \cup \Sigma^\omega_R \subset \Sigma^\infty\) is compact.

In the setting of Theorem \ref{maintheorem} let \(g_1,g_2,\ldots\) be i.i.d. with common distribution \(\mu\). The limits
\[g_\infty = \lim\limits_{n \to +\infty}g_1g_2\cdots g_n,\]
and
\[\overline{g_\infty} = \lim\limits_{n \to +\infty}\overline{g_1g_2\cdots g_n},\]

exist almost surely and belong to \(\Sigma^\omega\) and \(\Sigma^\omega_R\) respectively.   We let \(\nu_{\Sigma^{\omega}}\) denote the distribution of \(g_\infty\) and \(\nu_{\Sigma^\omega_{R}}\) the distribution of \(\overline{g_\infty}\).

We define \(\pi:\Sigma^* \to \Sigma \cup \lbrace \varepsilon \rbrace\) as the projection associating to each non-empty word its last letter, and satisfying \(\pi(\varepsilon) = \varepsilon\).

\begin{theorem}[Reduced infinite words are hidden markov]\label{reducedhiddenmarkovtheorem}
  There exists a Markov chain \(x_1,x_2,\ldots\) on \(\prefix(\supp(\mu)) \setminus \lbrace \varepsilon\rbrace\) such that
 \[\lim\limits_{n \to +\infty}\pi(x_1)\pi(x_2)\cdots \pi(x_n),\]
 exists, belongs to \(\Sigma^\omega_R\) almost surely, and has distribution \(\nu_{\Sigma^\omega_R}\).
\end{theorem}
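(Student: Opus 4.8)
The plan is to read the reduced limit word $\overline{g_\infty}$ off the random walk one letter at a time, following only those letters which, once placed by some increment, are never again cancelled, and recording as the hidden state the prefix of the increment currently responsible for the letter being emitted. I will then show that this prefix-valued process is a Markov chain on $\prefix(\supp(\mu)) \setminus \lbrace\varepsilon\rbrace$ and that applying $\pi$ coordinate-wise recovers $\overline{g_\infty}$, so that the emitted sequence automatically has law $\nu_{\Sigma^\omega_R}$.

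First I would set up the cancellation bookkeeping. Passing from $w_n = \overline{g_1\cdots g_n}$ to $w_{n+1} = \overline{w_n g_{n+1}}$, the increment $g_{n+1}$ cancels a prefix of itself against a suffix of $w_n$ and appends the remaining suffix of $g_{n+1}$. Since later cancellations always act on the current rightmost letters, the letters contributed by a single increment $g_m$ that survive \emph{forever} form a substring $g_m[i_m:J_m]$: the prefix $g_m[1:i_m-1]$ cancels at application, and a suffix $g_m[J_m+1:L_m]$ is eaten from the right by later increments. Hence $\overline{g_\infty}$ splits, with no gaps, as a concatenation of blocks $g_{m_1}[i_1:J_1]\,g_{m_2}[i_2:J_2]\cdots$ indexed by an increasing sequence $m_1 < m_2 < \cdots$ of surviving increments, and I define $x_n := g_{m_t}[1:j]$ when the $n$-th letter of $\overline{g_\infty}$ equals $g_{m_t}[j]$. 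Each such $x_n$ is a prefix of the support element $g_{m_t}$, so it lies in $\prefix(\supp(\mu)) \setminus \lbrace\varepsilon\rbrace$, and $\pi(x_n) = g_{m_t}[j]$ is exactly the $n$-th letter of $\overline{g_\infty}$; in particular $\lim_n \pi(x_1)\cdots\pi(x_n) = \overline{g_\infty} \in \Sigma^\omega_R$ with law $\nu_{\Sigma^\omega_R}$ by construction. That the decomposition is almost surely infinite (so the chain runs forever) follows from transience and positive speed of the walk on the free group, which force infinitely many surviving increments.

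The core step is the Markov property. Within a block the transitions are local: from a strict prefix $g_{m_t}[1:j]$ with $j<J_{m_t}$ one passes to $g_{m_t}[1:j+1]$, while from $g_{m_t}[1:J_{m_t}]$ one jumps to the first surviving prefix $g_{m_{t+1}}[1:i_{t+1}]$ of the next surviving increment. To prove that the law of $x_{n+1}$ depends only on $x_n$, I would introduce regeneration (last-exit) times at the block boundaries, namely times after which the walk never again returns above the committed reduced prefix. Translating by the committed word so that the frontier sits at the identity, the i.i.d.\ nature of the increments together with the spatial homogeneity of the free group make the future of the walk beyond such a time conditionally independent of the past given the residual configuration at the frontier, and this residual is precisely what the current prefix state encodes. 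I would then organize the finitely many residual configurations and their weights combinatorially on the finite set $\prefix(\supp(\mu))$, computing the one-step probabilities as ratios of first-passage and escape probabilities of the walk, to obtain a transition kernel depending on $x_n$ alone.

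The step I expect to be the main obstacle is exactly establishing that the finite prefix state is a sufficient statistic, i.e.\ that conditioning on it genuinely decouples past and future. Two features make this delicate. First, the commitment that defines $x_n$, through the quantities $i_m$ and $J_m$, is determined by which letters are cancelled in the \emph{future}, so the process $x_n$ is not adapted to the forward filtration of the walk; the regeneration argument is what converts these future-dependent commitments into a genuine forward Markov structure. Second, a given state $p$ may be a prefix of several support elements, so the identity of the active increment and the endpoint $J_m$ of its block are hidden and must be averaged out; moreover, because increments have length greater than one the walk can skip tree vertices, so commitment of a letter is not a last-visit-to-a-vertex event and the regeneration times must be defined through the surviving-substring bookkeeping rather than through returns to a single vertex. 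Showing that the averaged conditional law is still a function of $p$ only, and that the probability $P(x_{n+1}=\cdot \mid x_n, x_{n-1},\ldots)$ therefore collapses to a kernel in $x_n$, is where the real work lies; once this is in place, the Markov property and the identification of the emitted law with $\nu_{\Sigma^\omega_R}$ follow.
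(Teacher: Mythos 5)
Your overall architecture is sound and, interestingly, it is \emph{not} the paper's: you build the chain from the last-exit (surviving-letter) decomposition of the increments, recording for each position \(n\) of \(\overline{g_\infty}\) the prefix of the increment whose copy of that letter survives forever, whereas the paper proceeds in two stages that you skip entirely. The paper first Markovianizes the letter-emission process itself: it constructs a weighted prefix graph on \(\prefix(\supp(\mu))\) whose associated Markov chain \(y_1,y_2,\ldots\) emits the \emph{unreduced} word with law \(\nu_{\Sigma^\omega}\) (excursions from \(\varepsilon\) are i.i.d.\ with law \(\mu\), by a flow-balance identity on the edge weights). It then defines \(x_n = y_{\tau_n}\) where \(\tau_n\) is the \emph{first} time the running reduced word equals \(r[1:n]\); note this records the increment responsible for the first writing of \(r[n]\), not the surviving one, so your chain is a genuinely different process even though both emit \(r\). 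The decisive technical point is that the paper rewrites the cylinder event \(\lbrace x_1 = s_1,\ldots,x_n = s_n\rbrace\) as \(F_n \cap G\), where \(F_n\) involves first-passage times \(\tau_{w[1:k]}\) to \emph{fixed} words \(w[1:k]\) --- genuine stopping times for the \(y\)-chain --- and \(G = \lbrace w \prec r\rbrace\) is a terminal survival event; the strong Markov property then yields the product formula \(f(s_1)g(s_1,s_2)\cdots g(s_{n-1},s_n)h(s_n)\), i.e.\ a Markov chain via a Doob \(h\)-transform.

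The genuine gap in your proposal is that the Markov property --- which is the entire content of the theorem, since your bookkeeping makes the emitted law \(\nu_{\Sigma^\omega_R}\) automatic --- is deferred rather than proved, and the obstacles you yourself flag are real. Your commitment times (the times at which surviving letters are written) are determined by the future, so they are not stopping times and the strong Markov property cannot be applied to them directly; ``regeneration'' here must be implemented by an explicit last-exit computation, which you do not carry out. Moreover, your assertion that ``the residual is precisely what the current prefix state encodes'' is not correct as stated: at the commitment time of position \(n\), the configuration at the frontier is the leftover suffix \(g_{m_t}[j+1:L]\) of the \emph{hidden} full increment, which is not a function of the state \(x_n = g_{m_t}[1:j]\); one must prove that its conditional law given \(x_n = p\) and the survival conditioning is the same regardless of the past, and that averaging over it yields a kernel in \(p\) alone --- exactly the step you label as ``where the real work lies.'' A completed version of your route would presumably pass through a path-sum factorization analogous to the paper's \(f,g,h\) identity (with last-exit rather than first-passage kernels), and introducing the paper's auxiliary prefix-graph chain would make that computation tractable; as written, though, the proposal is a correct plan with its central step missing, and the subtlety of last-exit times versus stopping times is precisely what the paper's fixed-word first-passage trick is designed to avoid.
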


Theorem \ref{reducedhiddenmarkovtheorem} implies in particular that \(\nu_{\Sigma^\omega_R}\) is a hidden Markov measure.  This result follows from the recent work on coloured random walks \cite{bordenave-dubail} though our proof is different.   Since we need the specific statement above for what follows, we present the details in the present section.

We give an example illustrating that the harmonic measure is usually not Markov.

\begin{example}[Non Markovian harmonic measure]\label{example:non-Markovian}
    Let \(\mu\) be supported in the generator \(\{a^{-1},b^{-1},a^2b\}\), with \(\mu(\lbrace a^{-1}\rbrace) = \mu(\lbrace b^{-1}\rbrace) = \epsilon\) and \(\mu(\lbrace a^2b\rbrace) = 1-2\epsilon\), with \(\epsilon > 0\).

    We claim that
    \[\frac{\nu([ab])}{\nu([a])} < \frac{\nu([a^2b])}{\nu([a^2])},\]
    if \(\epsilon\) is small enough.

    For this purpose, we observe that if \(g_1,\ldots,g_n,\ldots\) are i.i.d.\ with distribution \(\mu\) and \(g_1 = g_2 = a^2b\), then \(\overline{g_1\cdots g_n}\) will begin with \(a^2b\) unless the number of times the letters \(b\) and \(b^{-1}\) appear in the non-reduced word \(g_1\cdots g_n\) match for some $n\geq 2$.

    This yields the inequality
    \[\nu([a]) \ge \nu([a^2]) \ge \nu([a^2b]) \ge \mu(\lbrace a^2b\rbrace)^2 p_1(\epsilon),\]
    where \(p_1(\epsilon)\) is the probability that a random walk on \(\Z\) starting at \(2\) (the number of times \(b\) appears in \(a^2ba^2b\)) and with transition probabilities \(p(n,n-1) = p(n,n) = \epsilon\) and \(p(n,n+1) = 1-2\epsilon\) for all \(n \in \Z\), does not reach \(0\) (we use \(p(x,y)\) for the transition probability from \(x\) to \(y\)).  In particular, \(\lim\limits_{\epsilon \downarrow 0}p_1(\epsilon) = 1\) (see for example \cite[Chapter IV.19]{spitzer}).

    On the other hand, if $\overline{g_1\cdots g_n}$ begins with \(ab\) and \(g_1 = a^2b\) then \(a\) must appear exactly once more than \(a^{-1}\) in the non-reduced word \(g_1\cdots g_k\), for some $2\leq k\leq n$.
    
    Discussing according to $g_1\in\{a^{-1},b^{-1},a^{2}b\}$ yields
    \[\nu([ab]) \le \mu(\lbrace a^{-1}\rbrace) + \mu(\lbrace b^{-1}\rbrace) + \mu(\lbrace a^2b\rbrace)p_2(\epsilon),\]
    where \(p_2(\epsilon)\) is the probability that a random walk on \(\Z\) starting at \(2\) with transition probabilities \(p(n,n+2) = 1-2\epsilon\) and \(p(n,n-1) = p(n,n) = \epsilon\), will reach \(1\).   
    
    Since \(\lim\limits_{\epsilon \downarrow 0}p_2(\epsilon) = 0\), we obtain that
    \[\frac{\nu([ab])}{\nu([a])} < \frac{\nu([a^2b])}{\nu([a^2])},\]
    for small enough \(\epsilon > 0\) as claimed.
\end{example}

The hidden Markov property of \(\nu_{\Sigma^\omega_R}\) is implicit in formulas for harmonic measure in terms of matrix products in \cite{lalley}.  However, the number of states of the Markov pre-image implied by Lalley's formulas is large, and his construction would not suffice for our proof of Theorem \ref{maintheorem}.

For nearest neighbor random walks on free groups the Markov property of harmonic measure is well known and has been extended to certain types of free products and other tree-like graphs (see \cite{ledrappier-free}, \cite{gerl-woess}, \cite{picardello-woess}, \cite{mairesse}, and \cite{mairesse-matheus}).

Our result implies for example, that when \(\supp(\mu) = \lbrace ab,a^{-1},b^{-1}\rbrace\), then \(\nu_{\Sigma^\omega_R}\) is Markov.   Other examples of random walks which are not nearest-neighbor but have Markov harmonic measures can be obtained by using stopping times on a nearest-neighbor walk as in \cite{forghani}.

Since the image of a Markov measure is rarely Markov (see for example \cite{burke-rosenblatt}, \cite{kelly}, and \cite{gurvits-ledoux})) the case where \(\nu_{\Sigma^\omega_R}\) is hidden Markov but not Markov also occurs (and in some sense should be generic).

A natural context for the results of this section seems to be that of finite complete rewriting systems, i.e. finite sets of reduction rules with the property that applying available reductions in any order one eventually arrives at a unique reduced form for each word.

For example, the plain groups studied in \cite{bordenave-dubail}, \cite{mairesse}, and \cite{mairesse-matheus}, admit a finite complete rewriting system where every rule substitutes a pair of consecutive letters either with the empty string or a single letter.

There exist finite presentations of surface groups which are complete re-writing systems (see \cite{chenadec} and \cite{hermiller}).  It seems of interest, in view of Conjecture \ref{klconjecture}, to explore whether the harmonic measure on the set of infinite reduced words for these presentations have the hidden Markov property.

\subsection{Infinite non-reduced words}

As an initial step towards the proof of Theorem \ref{reducedhiddenmarkovtheorem}  we give an explicit construction for the following:
\begin{lemma}[Non-reduced infinite words are hidden markov]\label{nonreducedlemma}
 There exists a Markov chain \(y_1,y_2,\ldots\) on \(\prefix(\supp(\mu))\) such that
 \[\lim\limits_{n \to +\infty}\pi(y_1)\pi(y_2)\cdots \pi(y_n),\]
 exists, belongs to \(\Sigma^\omega\) almost surely, and has distribution \(\nu_{\Sigma^\omega}\).
\end{lemma}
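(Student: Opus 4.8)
The plan is to construct the chain $(y_n)$ explicitly from the driving sequence $g_1, g_2, \ldots$ by scanning each increment letter by letter through its prefixes. Replacing $\mu$ by its restriction to $\Sigma^*_R \setminus \{\varepsilon\}$ (renormalized) changes neither $\nu_{\Sigma^\omega}$ nor the hypotheses, so I may assume every increment is nonempty. Set $N_0 = 0$ and $N_i = |g_1| + \cdots + |g_i|$, and for $N_{i-1} < n \le N_i$ put $y_n = g_i[1 : n - N_{i-1}]$, the prefix of $g_i$ of length $n - N_{i-1}$. Each $y_n$ then lies in $\prefix(\supp(\mu))$, and $\pi(y_n) = g_i[\,n - N_{i-1}\,]$ is precisely the matching letter of the concatenation; hence $\pi(y_1)\pi(y_2)\cdots = g_1 g_2 \cdots = g_\infty$ in $\Sigma^\omega$. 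The read-off therefore has distribution $\nu_{\Sigma^\omega}$ and lies in $\Sigma^\omega$ almost surely by construction, and it only remains to check that $(y_n)$ is a Markov chain.

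For the Markov property I would condition on the whole history $y_1, \ldots, y_n$ and compute the conditional law of $y_{n+1}$. Write $[v] = \{w \in \supp(\mu) : v \text{ is a prefix of } w\}$ and let $\mu([v])$ be its mass. The history records the completed increments $g_1, \ldots, g_{i-1}$ and the current prefix $v = y_n$ of $g_i$, but nothing more about $g_i$; because the increments are i.i.d., the conditional law of $g_i$ given the history is exactly $\mu$ restricted to $[v]$ and renormalized, and $g_{i+1}$ is still an independent copy of $\mu$. Consequently $y_{n+1}$ either extends the prefix, with probability $\mu([v\cdot x])/\mu([v])$ of moving to $v \cdot x$, or, if the increment terminates at $v$, restarts at the first letter of $g_{i+1}$, contributing $(\mu(v)/\mu([v]))\,\mu([x'])$ to the chance of landing on the single letter $x'$. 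Both weights depend only on $v$ and the destination, so $(y_n)$ is a time-homogeneous Markov chain on $\prefix(\supp(\mu)) \setminus \{\varepsilon\}$ with initial law $x' \mapsto \mu([x'])$; that the rows sum to one follows from the partition $\mu([v]) = \mu(v) + \sum_x \mu([v\cdot x])$.

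I expect the delicate point to be the transition out of a state $v$ that is at once an element of $\supp(\mu)$ and a proper prefix of a longer word of $\supp(\mu)$: from $y_n = v$ alone one cannot tell whether the current increment has ended, so one must confirm that this ambiguity does not break the Markov property. The resolution is that the conditional law of $g_i$ given $\{g_i \in [v]\}$ already splits the mass correctly, assigning weight $\mu(v)/\mu([v])$ to termination and $\mu([v\cdot x])/\mu([v])$ to each continuation, so both branches collapse into a single kernel depending only on $v$. I would therefore isolate the identity that conditioning on the history is the same as conditioning on $\{g_i \in [v]\}$ as the technical core of the proof, the remainder being the bookkeeping of prefixes and the elementary normalization check.
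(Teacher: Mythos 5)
Your proposal is correct, but it runs in the opposite direction from the paper's argument. The paper defines the chain abstractly: it builds a weighted prefix graph on \(\prefix(\supp(\mu))\) (including the state \(\varepsilon\), with an edge \(w \to \varepsilon\) of weight \(\mu(\lbrace w\rbrace)\) for each \(w \in \supp(\mu)\)), proves a flow-balance proposition (incoming weight equals outgoing weight at every vertex), normalizes the weights into transition probabilities, and then identifies the law of the read-off by an excursion argument: successive excursions between visits to \(\varepsilon\) are i.i.d.\ by the strong Markov property, and a telescoping computation using flow-balance shows that the word traced by a single excursion has law \(\mu\). You instead construct the chain pathwise, as a deterministic functional of the i.i.d.\ increments, so the read-off is literally \(g_\infty\) and the distributional claim is automatic; the entire burden shifts to the Markov property, which you settle by the parsing observation that increment boundaries are recoverable from the trajectory (\(|y_{m+1}| = |y_m| + 1\) for continuations versus \(|y_{m+1}| = 1\) for restarts), whence conditioning on the full history is the same as conditioning on \(\lbrace v \prec g_i \rbrace\). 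You correctly flag this parsing identity as the technical core, and it should be written out explicitly in a final version, but it is routine as you indicate. Note that after collapsing the visits to \(\varepsilon\) (which read off the empty letter and hence do not affect the limit word), your kernel coincides exactly with the paper's: continuation \(v \to vx\) with probability \(\mu([vx])/\mu([v])\) and restart \(v \to x'\) with probability \((\mu(\lbrace v\rbrace)/\mu([v]))\,\mu([x'])\), where the identification of the paper's normalizing outflow at \(v\) with the inflow \(\mu([v])\) is precisely its flow proposition. Your coupling buys a transparent proof of the distributional statement at the cost of the history-parsing argument; the paper's graph-theoretic construction costs the excursion computation but sets up the weighted prefix graph and the chain based at \(\varepsilon\) in terms of which the rest of Section 2 (the stopping times \(\tau_w\) and the functions \(f\), \(g\), \(h\)) is phrased. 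Your preliminary reduction discarding a possible atom at \(\varepsilon\) is legitimate, since deleting empty increments changes neither \(g_\infty\) nor \(\nu_{\Sigma^\omega}\), and any chain on \(\prefix(\supp(\mu)) \setminus \lbrace\varepsilon\rbrace\) is in particular a chain on \(\prefix(\supp(\mu))\), as the statement requires.
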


\subsubsection{Weighted prefix graph}

We will construct a weighted graph out of \(\prefix(\supp(\mu))\), see Figure \ref{prefixgraphfigure} for an example. 

We say \(w'\) is a one step prefix of \(w\) and write \(w' \overset{1}{\prec} w\), if \(w = w'x\) for some \(x \in \Sigma\). More generally we write \(w' \prec w\) if \(w'\) is a prefix of \(w\).

\begin{definition}[Weighted prefix graph]
 By the prefix graph associated to \(\mu\) we mean the graph with vertex set \(\prefix(\supp(\mu))\), where a single directed edge is added from \(w'\) to \(w\) whenever \(w' \overset{1}{\prec} w\), and also from \(w\) to \(\varepsilon\) for each \(w \in \supp(\mu)\).

 To each edge of the form \(w' \overset{1}{\prec} w\) we associate the weight 
\[W(w',w) = \sum\limits_{w \prec w''}\mu(\lbrace w''\rbrace),\]
and to each edge of the form \(w \rightarrow \varepsilon\) the weight \(W(w,\varepsilon) = \mu(\lbrace w\rbrace)\).
\end{definition}

We now prove two basic properties of the weighted prefix graph which we need to prove Lemma \ref{nonreducedlemma}.

\begin{proposition}
 There exists \(N\) such that every path of length \(N\) in the prefix graph passes through \(\varepsilon\) at least once.
\end{proposition}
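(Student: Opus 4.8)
The plan is to exploit the fact that the prefix graph has only two kinds of edges, and that one kind strictly increases word length while the only edges that can decrease length land on \(\varepsilon\). First I would set \(L = \max\lbrace |w| : w \in \supp(\mu)\rbrace\), which is finite because \(\supp(\mu)\) is finite. Every vertex of the prefix graph lies in \(\prefix(\supp(\mu))\) and is therefore a prefix of some word in \(\supp(\mu)\); in particular every vertex has length at most \(L\).

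The key structural observation is that each edge is of exactly one of two types. A forward edge \(w' \to w\) arises when \(w' \overset{1}{\prec} w\), i.e. \(w = w'x\) for some \(x \in \Sigma\), and then \(|w| = |w'| + 1\). A back edge \(w \to \varepsilon\) arises for \(w \in \supp(\mu)\), and its target is the empty word \(\varepsilon\). Consequently, if a path \(v_0 \to v_1 \to \cdots \to v_n\) never passes through \(\varepsilon\), then none of its vertices equals \(\varepsilon\), so none of its edges can be a back edge (a back edge would force its target \(\varepsilon\) to appear on the path). Hence every edge of such a path is a forward edge, and therefore \(|v_{i+1}| = |v_i| + 1\) for all \(i\), which telescopes to \(|v_n| = |v_0| + n\).

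I would then take \(N = L + 1\). Along any path \(v_0 \to \cdots \to v_N\) of length \(N\) that avoids \(\varepsilon\), the previous paragraph gives \(|v_N| = |v_0| + N \ge N = L + 1\), contradicting the bound \(|v_N| \le L\) valid for every vertex. Hence no path of length \(N\) can avoid \(\varepsilon\), which is exactly the claim. (In fact \(N = L\) already suffices, since avoiding \(\varepsilon\) forces \(|v_0| \ge 1\), but the weaker value is enough and cleaner to state.) As for difficulty, the argument is essentially a bookkeeping observation about a length function that is strictly monotone along forward edges and bounded by \(L\); the only point requiring care is the clean classification of edge types and the verification that avoiding \(\varepsilon\) genuinely rules out every back edge, so I expect no substantive obstacle.
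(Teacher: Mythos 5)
Your proof is correct and follows essentially the same route as the paper's: every edge not terminating at \(\varepsilon\) strictly increases word length, and word length is bounded by \(\max\lbrace |w| : w \in \supp(\mu)\rbrace\), which is finite. You merely spell out the edge-type classification and the explicit value \(N = L+1\) that the paper leaves implicit.
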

\begin{proof}
 Each step along a path that does not arrive at \(\varepsilon\) increases word length.  The maximal possible word length is \(\max\lbrace |w|: w \in \supp(\mu)\rbrace\) which is finite since \(\supp(\mu)\) is a finite set.
\end{proof}

\begin{proposition}[Incoming and outgoing weights are equal]\label{flowproposition}
With the above definition one has
\[\sum\limits_{y}W(y,x) = \sum\limits_{y}W(x,y),\] 
for all \(x \in \prefix(\supp(\mu))\).
\end{proposition}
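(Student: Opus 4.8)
The plan is to express both sides of the identity through a single auxiliary quantity. For a vertex $w \in \prefix(\supp(\mu))$ write $m(w) = \sum_{w \prec w''}\mu(\lbrace w''\rbrace)$ for the total $\mu$-mass of the words of $\supp(\mu)$ having $w$ as a prefix (with $w$ itself counted when $w \in \supp(\mu)$). By definition each one-step-prefix edge $w \overset{1}{\prec} w'$ carries the weight $W(w,w') = m(w')$, that is, $m$ evaluated at its head, while each return edge $w \to \varepsilon$ carries the weight $\mu(\lbrace w\rbrace)$. The one ingredient I would isolate first is the elementary decomposition
\[ m(w) = \mu(\lbrace w\rbrace)\,\one[w \in \supp(\mu)] + \sum_{x \in \Sigma,\; wx \in \prefix(\supp(\mu))} m(wx), \]
obtained by partitioning the words $w'' \succ w$ according to whether $w'' = w$ or $w''$ strictly extends $w$; in the latter case each such $w''$ is grouped by the single letter $x$ that immediately follows $w$, so that $wx \prec w''$. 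Every word of $\supp(\mu)$ prefixed by $w$ is assigned to exactly one group, so this is merely a regrouping of a finite sum.

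For a non-empty vertex $x$ I would then argue as follows. Its unique incoming one-step-prefix edge originates at the word $x^-$ obtained by deleting the last letter of $x$, and no return edge points to $x$ since return edges reach only $\varepsilon$; hence the incoming weight is exactly $W(x^-,x) = m(x)$. Its outgoing edges are the one-step-prefix edges to the children $xy \in \prefix(\supp(\mu))$, of total weight $\sum_y m(xy)$, together with a return edge of weight $\mu(\lbrace x\rbrace)$ present precisely when $x \in \supp(\mu)$. The displayed decomposition says these two totals coincide, which is the claim at $x$.

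The vertex $\varepsilon$ is handled by the same decomposition under the convention $m(\varepsilon) = 1$: the outgoing one-step-prefix edges lead to the single-letter prefixes, and their weights sum to $\sum_{x \in \Sigma} m(x) = 1$, while the incoming weight is $\sum_{w \in \supp(\mu)} \mu(\lbrace w\rbrace) = 1$; any self-loop mass $\mu(\lbrace\varepsilon\rbrace)$ appears identically in both totals and so cancels. The only genuine obstacle here is bookkeeping: one must keep the two edge types strictly separate, remember that $\varepsilon$ is the sole target of return edges while also being the root of the prefix tree, and check that in the decomposition of $m$ every prefixed word of $\supp(\mu)$ is counted once and only once. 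Once the decomposition of $m$ is established, the verification at each vertex is immediate.
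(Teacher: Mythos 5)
Your proposal is correct and takes essentially the same route as the paper: both arguments amount to regrouping the prefix mass \(\sum_{x \prec w}\mu(\lbrace w\rbrace)\) according to the letter immediately following \(x\), with the return edge to \(\varepsilon\) absorbing the extra term \(\mu(\lbrace x\rbrace)\) when \(x \in \supp(\mu)\). Your only departure is organizational—you isolate this regrouping as a single decomposition of the auxiliary quantity \(m\) and apply it uniformly at every vertex, which if anything handles the case of an atom at the empty word slightly more carefully than the paper's case-by-case computation.
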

\begin{proof}
If \(x = \varepsilon\) the left-hand side is \(1\) since there is a term \(\mu(\lbrace w\rbrace)\) for each \(w \in \supp(\mu)\).   The right-hand side is
\[\sum\limits_{x \in \Sigma} \sum\limits_{x \prec w} \mu(\lbrace w\rbrace),\]
which is the same sum grouped by first letter and therefore is also \(1\).

For \(x \neq \varepsilon\) one has \(x' \overset{1}{\prec}x\) for a  unique (possibly empty) word \(x' \in \prefix(\supp(\mu))\).  Hence, the left-hand side is
\[\sum\limits_{x \prec w}\mu(\lbrace w\rbrace).\]

If \(x \notin \supp(\mu)\) then the right-hand side is
\[\sum\limits_{x \overset{1}{\prec}x''} \sum\limits_{x'' \prec w} \mu(\lbrace w\rbrace),\]
which, is the sum above regrouped by the first letter after \(x\).

If \(x \in \supp(\mu)\) then the two sums differ by \(\mu(x)\) which appears in the right-hand side as \(W(x,\varepsilon)\).  This concludes the proof of the claim.
\end{proof}

\subsubsection{Proof of Lemma \ref{nonreducedlemma}}

To conclude the proof of Lemma \ref{nonreducedlemma} we consider the Markov chain on the weighted prefix graph whose transition probabilities are proportional to the given weights.

\begin{lemma}[Markov chain on the weighted prefix graph]
Let \(y_1,y_2,\ldots\) be a Markov chain on the weighted prefix graph starting at \(\varepsilon\) and with transition probabilities given by
\[P(x,y) = \frac{W(x,y)}{\sum\limits_{z}W(x,z)}.\]

Then \(\lim\limits_{n \to +\infty}\pi(y_1)\pi(y_2)\cdots \pi(y_n)\),
has distribution \(\nu_{\Sigma^\omega}\).
\end{lemma}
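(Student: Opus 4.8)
The plan is to decompose the trajectory of the chain into its successive excursions away from the state $\varepsilon$ and to show that each excursion reads off exactly one word of $\supp(\mu)$, sampled according to $\mu$ and independently of the others. First I would note that $\varepsilon$ is visited infinitely often: by the proposition on bounded path length above, every path of length $N$ passes through $\varepsilon$, so the return time to $\varepsilon$ is at most $N$, the chain returns to $\varepsilon$ infinitely often almost surely, and each excursion between consecutive visits is finite. Since every word in $\supp(\mu)$ is non-empty, each excursion reads off at least one letter, so the partial concatenations $\pi(y_1)\cdots\pi(y_n)$ converge letter-wise to a genuine element of $\Sigma^\omega$.

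Next I would identify what a single excursion reads off. The only edges into $\varepsilon$ are those of the form $w \to \varepsilon$ with $w \in \supp(\mu)$, so an excursion necessarily terminates at some $w \in \supp(\mu)$; and since every other step appends a letter, the excursion traces a chain of one-step prefixes $\varepsilon = w_0 \overset{1}{\prec} w_1 \overset{1}{\prec} \cdots \overset{1}{\prec} w_k = w$. The letters read off along the way, $\pi(w_1),\ldots,\pi(w_k)$, are precisely the letters of $w$, so each excursion contributes exactly its terminal word $w$ to the concatenation $\pi(y_1)\pi(y_2)\cdots$.

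The main computation is the law of this terminal word. Writing $M(v) = \sum_{v \prec w''}\mu(\lbrace w''\rbrace)$ for the $\mu$-mass of support words extending $v$, Proposition \ref{flowproposition} shows that the total outgoing weight at any $v \neq \varepsilon$ equals its total incoming weight; as the unique incoming one-step-prefix edge carries weight $M(v)$, this common value is $M(v)$, while at $\varepsilon$ the total outgoing weight is $M(\varepsilon)=1$. Hence each extension step has probability $P(w_{j-1},w_j) = W(w_{j-1},w_j)/M(w_{j-1}) = M(w_j)/M(w_{j-1})$ and the closing step has probability $P(w,\varepsilon)=\mu(\lbrace w\rbrace)/M(w)$, so the product over the whole excursion telescopes:
\[
\left(\prod_{j=1}^{k}\frac{M(w_j)}{M(w_{j-1})}\right)\frac{\mu(\lbrace w\rbrace)}{M(w)} = \frac{M(w)}{M(\varepsilon)}\cdot\frac{\mu(\lbrace w\rbrace)}{M(w)} = \mu(\lbrace w\rbrace).
\]
Thus the word produced by a single excursion is distributed according to $\mu$.

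Finally, applying the strong Markov property at the successive return times to $\varepsilon$ makes the excursions independent and identically distributed; the words $g_1,g_2,\ldots$ they produce are therefore i.i.d.\ with law $\mu$, and their concatenation is exactly $\lim_{n}\pi(y_1)\cdots\pi(y_n)$. By definition this infinite concatenation has distribution $\nu_{\Sigma^\omega}$, which proves the lemma. I expect the only delicate point to be the bookkeeping in the telescoping product — specifically, verifying via Proposition \ref{flowproposition} that the normalizing denominators are exactly the $M(w_{j-1})$ — after which the excursion decomposition and the i.i.d.\ structure follow routinely from the strong Markov property.
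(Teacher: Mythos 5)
Your proposal is correct and follows essentially the same route as the paper: decompose the trajectory at successive returns to \(\varepsilon\), use the strong Markov property to get i.i.d.\ excursions, and show each excursion's terminal word has law \(\mu\) via a telescoping product of transition probabilities whose denominators are identified through Proposition \ref{flowproposition}. Your write-up is somewhat more explicit than the paper's (spelling out the finiteness of excursions and the normalization \(M(v)\)), but the underlying argument is identical.
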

\begin{proof}
Let \(\tau_0 = 1\) and inductively define \(\tau_{n+1} = \min\lbrace k > \tau_n: y_{k} = \varepsilon\rbrace\) for \(n = 0,1,\ldots\).  By the strong Markov property the sequence \(y_{\tau_{1}-1}, y_{\tau_2-1},\ldots\) is i.i.d..  It suffices to show that \(y_{\tau_1 -1}\) has distribution \(\mu\) to prove the claim.

For this purpose fix \(w \in \supp(\mu)\), set \(n = |w|\), and observe using Proposition \ref{flowproposition} that
\begin{align*}
\Pof{y_{\tau_1 - 1} = w} &= P(\varepsilon,w[1])\cdots P(w[1:n-1],w)P(w,\varepsilon) 
\\ &= \frac{W(w,\varepsilon)}{\sum\limits_{x \in \Sigma}W(\varepsilon,x)} = \mu(\lbrace w\rbrace).\qedhere
\end{align*}
\end{proof}

\begin{figure}[H]
\includegraphics{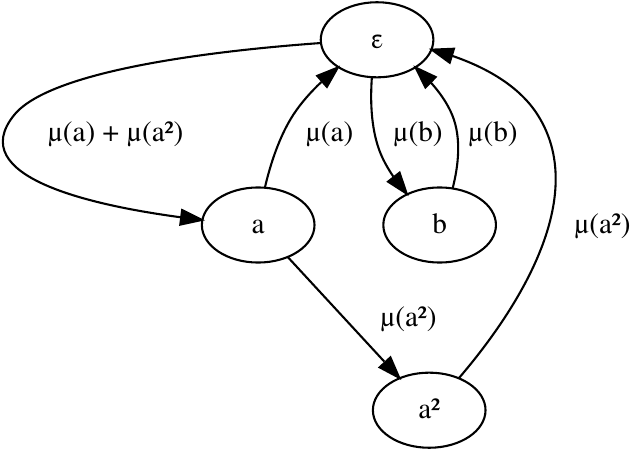}
\caption{\label{prefixgraphfigure}The weighted prefix graph associated to a probability \(\mu\) with support \(\supp(\mu) = \lbrace a, a^2, b\rbrace\).}
\end{figure}

\subsection{Proof of Theorem \ref{reducedhiddenmarkovtheorem}}

Let \(y_1,y_2,\ldots\) be as in Lemma \ref{nonreducedlemma} and define  \(S = \prefix(\supp(\mu)) \setminus \lbrace \varepsilon\rbrace\).

Given a (possibly empty) reduced word \(w\) we define the stopping time 
\[\tau_w = \min\lbrace n: \overline{\pi(y_1)\cdots \pi(y_n)} = w\rbrace.\]

Let \(f:S \to [0,1]\) be defined by
\[f(s) = \Pof{\tau_{\pi(s)} < +\infty, y_{\tau_{\pi(s)}} = s}.\]

Similarly let \(g:S \times S \to [0,1]\) be defined by
\[g(s,s') = \P^{s}\left(\tau_{\pi(s)\pi(s')} < +\infty, y_{\tau_{\pi(s)\pi(s')}} = s'\right),\]
where \(\P^s\) denotes the probability for the Markov chain conditioned to start at \(s\).   Notice that if \(\pi(s)\pi(s')\) is not reduced then \(g(s,s') = 0\).

Finally let \(h:S \to [0,1]\) be defined by
\[h(s) = \P^s\left(\pi(s) \prec \overline{\pi(y_1)\cdots \pi(y_n)}\text{ for all }n\text{ large enough}\right).\]

\begin{definition}[Reduced prefix graph]
We observe that \(g(s,s') > 0\) if and only if there exists a path \(s = s_0 \rightarrow s_1 \rightarrow \cdots \rightarrow s_n = s'\) in the weighted prefix graph such that 
\[\overline{\pi(s_0)\cdots \pi(s_n)} = \pi(s)\pi(s')\]
and furthermore
\[\overline{\pi(s_0)\cdots \pi(s_k)} \neq \pi(s)\pi(s'),\]
for all \(k < n\).

By the reduced prefix graph we mean the set \(S = \prefix(\supp(\mu)) \setminus \lbrace \varepsilon\rbrace\) with a single directed edge between \(s\) and \(s'\) when the above property holds. 
\end{definition}

The following result concludes the proof of Theorem \ref{reducedhiddenmarkovtheorem}.

\begin{lemma}\label{markovlemma}
 Let \(r = \lim\limits_{n \to +\infty}\overline{\pi(y_1)\cdots \pi(y_n)}\) and set \(\tau_n = \min\lbrace k: \overline{\pi(y_1)\cdots \pi(y_k)} = r[1:n]\rbrace\) for \(n = 1,2,\ldots\).
 
 Then the sequence \(x_1 = y_{\tau_1},x_2 = y_{\tau_2},\ldots\) is a Markov chain and furthermore
 \begin{equation}\label{markovequation}
 \Pof{x_1 = s_1,\ldots, x_n = s_n} = f(s_1)g(s_1,s_2)\cdots g(s_{n-1},s_n)h(s_n),
 \end{equation}
 for all \(s_1,\ldots, s_n \in S\).
\end{lemma}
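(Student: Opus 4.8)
The plan is to compute the joint law of $(x_1,\dots,x_n)$ directly, so that both the factorisation \eqref{markovequation} and the Markov property can be read off from its product structure. Fix $s_1,\dots,s_n \in S$ and write $w_k = \pi(s_1)\cdots\pi(s_k)$; if some $w_k$ fails to be reduced, then both sides of \eqref{markovequation} vanish, so I may assume each $w_k$ is reduced. Let $T_k$ be the first time the reduced word $\overline{\pi(y_1)\cdots\pi(y_m)}$ equals $w_k$. A preliminary observation I would establish is that the first time the reduced word admits $w_k$ as a prefix coincides with $T_k$ — at that instant one has just appended $\pi(s_k)$ to $w_{k-1}$ — from which $T_1 < T_2 < \cdots$ and, on the event $\{w_n \prec r\}$, each $T_k$ is finite. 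Since on $\{x_1 = s_1,\dots,x_n = s_n\}$ one has $r[1:k] = w_k$ and hence $\tau_k = T_k$, this event equals $\{y_{T_1}=s_1,\dots,y_{T_n}=s_n,\ w_n \prec r\}$, whose probability is what I will evaluate.

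The heart of the argument is a regeneration step combining the strong Markov property with the locality of reduction. At time $T_1$ the reduced word is exactly $\pi(s_1)$; writing the reduced word at time $T_1+m$ as $\overline{\pi(s_1)\cdot v_m}$, where $v_m = \overline{\pi(y_{T_1+1})\cdots\pi(y_{T_1+m})}$ records the emissions after $T_1$, the strong Markov property identifies the law of $(y_{T_1+m},v_m)_m$, conditionally on $y_{T_1}=s_1$, with that of a fresh chain under $\P^{s_1}$. As $\Pof{T_1<\infty,\ y_{T_1}=s_1} = f(s_1)$, this peels off a factor $f(s_1)$ and rewrites the remaining probability as a fresh-chain quantity $G^{(s_1)}(s_2,\dots,s_n) = \P^{s_1}(y_{\tau_{w_2}}=s_2,\dots,y_{\tau_{w_n}}=s_n,\ w_n \prec r)$. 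Iterating the same regeneration at the successive hitting times $\tau_{w_2},\tau_{w_3},\dots$, and using the cancellation identity $\overline{\pi(s_1)\,\rho}=w_k \iff \rho = \pi(s_2)\cdots\pi(s_k)$ (valid precisely because $w_k$ is reduced) to translate each excursion into the next fresh chain, produces one factor $g(s_k,s_{k+1})$ at a time; the base case of the recursion, a single remaining state $s_n$, contributes exactly $h(s_n)$, since the event $\pi(s_n)\prec r$ is by definition that $\pi(s_n)$ is an eventual prefix of the reduced word. Combining the peeled factors yields \eqref{markovequation}.

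The Markov property of $(x_n)$ is then immediate from \eqref{markovequation}: dividing the expression for $(s_1,\dots,s_n,s_{n+1})$ by that for $(s_1,\dots,s_n)$ shows that $\Pof{x_{n+1}=s_{n+1}\mid x_1=s_1,\dots,x_n=s_n}$ equals $g(s_n,s_{n+1})h(s_{n+1})/h(s_n)$, which depends only on $s_n$ and $s_{n+1}$; states $s$ with $h(s)=0$ receive zero mass and cause no difficulty.

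I expect the main obstacle to be making the locality-of-reduction correspondence fully rigorous. One must check that the events $\{y_{T_k}=s_k\}$ and $\{w_n \prec r\}$ are measurable with respect to the post-regeneration emissions, and that they translate correctly under the identity $\overline{\pi(s_1)\cdot v} = \cdots$, even though the reduced word may cancel all the way down to the frozen prefix (and, through it, below) between regeneration times. Concretely this is the free-group bookkeeping that, at each stage, matches first-hitting times of $w_k$ with first-hitting times of the shifted words $\pi(s_j)\cdots\pi(s_k)$ in the fresh chain, and matches the eventual-prefix event $\{w_n \prec r\}$ with its fresh-chain counterpart. The reducedness of each $w_k$ and the almost sure existence of the limits $r$ are exactly what keep these translations exact; once they are verified, the recursion closes and the factorisation follows.
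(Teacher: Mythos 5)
Your proposal is correct and follows essentially the same route as the paper's proof: you identify \(\{x_1=s_1,\dots,x_n=s_n\}\) with the intersection of the hitting-time events \(\{\tau_{w_k}<\infty,\ y_{\tau_{w_k}}=s_k\}\) and the tail event \(\{w_n \prec r\}\), then peel off the factors \(f(s_1)\), \(g(s_k,s_{k+1})\), \(h(s_n)\) via the strong Markov property at the successive regeneration times, exactly as the paper does with its events \(F_i\) and \(G\). The free-group bookkeeping you flag as the main obstacle (first time the reduced word has \(w_k\) as a prefix equals the first time it equals \(w_k\); \(\overline{w_k\,\bar v}=w_{k+1}\) if and only if \(\bar v=\pi(s_{k+1})\), even allowing intermediate dips below the frozen prefix) is sound and is precisely what the paper asserts, more tersely, in its two ``we observe'' steps.
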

\begin{proof}
It suffices to prove equation \ref{markovequation}, since this implies that \(x_1,\ldots, x_n\) is Markov with initial distribution \(p(s) = f(s)h(s)\) and transtion probabilities \(P(s,s') = h(s)^{-1}g(s,s')h(s')\).

For this purpose we let \(w = \pi(s_1)\cdots \pi(s_n)\) and write
\begin{equation} 
\Pof{x_1 = s_1,\ldots, x_n = s_n} = \Pof{F_1 \cap \cdots F_n \cap G} = \Pof{F_n \cap G},
\end{equation}
where \(F_i = \lbrace \tau_{w[1:k]} < +\infty, y_{\tau_{w[1:k]}} = s_k\text{ for all }k \le i\rbrace\) and \(G = \lbrace r\text{ reduced word}: w \prec r\rbrace\).

We observe that conditioned on \(F_n\) the event \(G\) is equivalent to
\[\left\lbrace\pi(s_n) \prec \overline{\pi(y_{\tau_w})\cdots \pi(y_{k + \tau_w})}\text{ for all }k\text{ large enough}\right\rbrace.\]

Therefore, by the strong Markov property we obtain 
\begin{equation}\Pof{F_n \cap G} = \Pof{F_n}\Pof{G | F_n} = \Pof{F_n}h(s_n).
\end{equation}

For each \(i = 1,\ldots,n\) we observe that conditioned on \(F_i\) the event \(F_{i+1}\) is equivalent to there being some finite \(k\) such that \(\overline{\pi(y_{\tau_{w[1:i]}})\cdots \pi(y_{k + \tau_{w[1:i]}})} = \pi(s_i)\pi(s_{i+1})\) and the minimal \(k\) with this property satisfying \(y_{k + \tau_{w[1:i]}} = s_{i+1}\).

Once again by the strong Markov property we obtain 
\begin{equation}
\Pof{F_{i+1}} = \Pof{F_i}\Pof{F_{i+1}|F_i} = \Pof{F_i}g(s_i,s_{i+1}).
\end{equation}

Hence we have 
\begin{align*}
 \Pof{x_1 = s_1,\ldots, x_n = s_n} &= \Pof{F_n \cap G}
 \\ &= \Pof{F_n}\Pof{G|F_n}
 \\ &= \Pof{F_{n-1}}\Pof{F_n|F_{n-1}}h(s_n)
 \\ &= \cdots 
 \\ &= \Pof{F_1}g(s_1,s_2)\cdots g(s_{n-1},s_n)h(s_n),
\end{align*}
the equality \(\Pof{F_1} = f(s_1)\) is direct from the definition of \(f\), which concludes the proof.
\end{proof}

\section{Symbolic coding and thermodynamic formalism}

Recall that there exists family of half-planes \(\lbrace H_x: x \in \Sigma\rbrace\) whose closures in \(\H^2 \cup \partial \H^2\) are disjoint and such that for all \(x\) one has \(\rho(x)H_y \subset H_x\) for all \(y \in \Sigma \setminus \lbrace x^{-1}\rbrace\).

We fix a basepoint \(o \in \H^2 \setminus \bigcup\limits_{x \in \Sigma}H_x\) and let \(\dist\) denote the hyperbolic distance.   See Figure \ref{schottkyfigure}.

Given a finite reduced word \(w \in \Sigma^*_R\) we denote the set of infinite reduced words having \(w\) as a prefix by
\[[w] = \lbrace w' \in \Sigma^\omega_R: w \prec w'\rbrace.\]

\begin{theorem}\label{dimdropalternativetheorem}
 In the setting of Theorem \ref{maintheorem} either \(\dim(\nu) < \dim(\Lambda)\) or there exists a constant \(C > 1\) such that
 
 \[C^{-1}\exp(-\delta \dist(o,\rho(\omega)o)) \le \nu_{\Sigma^\omega_R}\left([w]\right) \le C\exp(-\delta \dist(o,\rho(\omega)o)),\] 
 for all \(w \in \Sigma^{*}_R\), where \(\delta = \dim(\Lambda)\).
\end{theorem}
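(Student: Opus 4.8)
The plan is to prove the dichotomy in the equality case: assuming the dimension does not drop (so \(\dim(\nu) = \delta\)), I will derive the uniform two-sided estimate, which amounts to comparability of \(\nu_{\Sigma^\omega_R}\) with the Patterson--Sullivan (conformal) measure of \(\Lambda\). First I would set up the symbolic coding promised by the section title. From \cref{markovlemma} the cylinder masses are given explicitly by
\[
\nu_{\Sigma^\omega_R}([w]) = \sum_{\substack{s_1,\dots,s_n \in S \\ \pi(s_i)=w[i]}} f(s_1)\,g(s_1,s_2)\cdots g(s_{n-1},s_n)\,h(s_n),
\]
so that grouping states by their last letter realizes \(w \mapsto \nu_{\Sigma^\omega_R}([w])\) as a product of nonnegative matrices indexed by the letters of \(w\) and transported along the reduced prefix graph. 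On the geometric side, the ping-pong structure of the Schottky group gives that shadows of cylinders have visual diameter comparable to \(\exp(-\dist(o,\rho(w)o))\), and lets me write \(\dist(o,\rho(w)o)\), up to a bounded additive error, as a Birkhoff sum \(\sum_i \kappa(\sigma^i w)\) of a Hölder potential \(\kappa\) over the mixing subshift of finite type that encodes reduced words. Then \(\delta = \dim(\Lambda)\) is the unique zero of the pressure \(t \mapsto P(-t\kappa)\) (Bowen's formula), and the associated equilibrium state \(m\) is the \(\delta\)-conformal measure, satisfying the uniform Gibbs bound \(m([w]) \asymp \exp(-\delta \dist(o,\rho(w)o))\); consequently the target estimate is exactly the statement that \(\nu_{\Sigma^\omega_R}\) and \(m\) are comparable with bounded density.

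Next I would compute the dimension. By \cref{reducedhiddenmarkovtheorem} the Shannon--McMillan--Breiman theorem for the hidden Markov chain gives \(-\tfrac1n \log \nu_{\Sigma^\omega_R}([w_n]) \to h\) and the ergodic theorem for the chain \((x_n)\) gives \(\tfrac1n \dist(o,\rho(w_n)o) \to \ell\) along the length-\(n\) prefixes \(w_n\) of a \(\nu_{\Sigma^\omega_R}\)-typical point, whence \(\dim(\nu) = h/\ell\). Comparing the two normalizations \(\sum_{|w|=n}\nu_{\Sigma^\omega_R}([w]) = 1\) and \(\sum_{|w|=n} \exp(-\delta\dist(o,\rho(w)o)) \asymp 1\) (the latter from the Gibbs property of \(m\) at pressure zero) through the nonnegativity of relative entropy (the log-sum inequality) yields the fundamental inequality \(h \le \delta\ell\), i.e. \(\dim(\nu) \le \delta\). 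The dichotomy is thus reduced to the equality case: if \(h = \delta\ell\), I must upgrade this to the uniform bound for all \(w\).

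For the upgrade I would analyze the multiplicative functional \(\Phi(w) = \nu_{\Sigma^\omega_R}([w])\exp(\delta\dist(o,\rho(w)o)) \asymp \nu_{\Sigma^\omega_R}([w])/m([w])\), which is a nonnegative martingale with respect to \(m\). Using the matrix-product form of \(\nu_{\Sigma^\omega_R}([w])\) and the strong connectivity (irreducibility and aperiodicity) of the reduced prefix graph inherited from the assumption that \(\supp(\mu)\) generates, I would apply a Perron--Frobenius / Birkhoff-cone contraction argument to show that the matrix cocycle has bounded projective distortion; this yields uniform two-sided bounds and identifies the pressure and dimension of the associated shift-invariant measure. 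The rigidity input of thermodynamic formalism — uniqueness of the equilibrium state for a Hölder potential on a mixing subshift of finite type — then forces, in the equality case \(h = \delta\ell\), that the potential governing \(\nu_{\Sigma^\omega_R}\) be cohomologous to \(-\delta\kappa\) up to a constant, which is precisely boundedness of \(\Phi\) above and below. In the strict case \(h < \delta\ell\) the same functional satisfies \(\tfrac1n\log\Phi(w_n) \to \delta\ell - h > 0\), which only reconfirms the dimension drop.

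I expect the main obstacle to be this last upgrade, and specifically the regularity of \(\nu_{\Sigma^\omega_R}\). The measure is only a \emph{hidden} Markov (sofic) measure — a letter-wise image of the Markov chain on \(S\) — and images of Markov measures are in general not Gibbs, so I cannot simply quote a Gibbs property. The crux is therefore to extract uniform bounded-distortion estimates directly from the matrix-product representation, exploiting that each letter corresponds to a finite, bounded-memory block of states and that the reduced prefix graph stays primitive; only then do the comparison with the conformal cocycle and the equilibrium-state uniqueness become available. A secondary technical point I would handle carefully is the passage between the hitting measure \(\nu_{\Sigma^\omega_R}\), which is not shift-invariant, and its associated shift-invariant measure, verifying that they share the same local dimension and comparability class so that the variational principle applies.
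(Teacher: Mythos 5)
Your architecture coincides with the paper's at every structural joint except one. The geometric half of your plan --- writing \(\dist(o,\rho(w)o)\) as a Birkhoff sum of a H\"older potential up to bounded additive error, Bowen's formula for \(\delta\), and the Gibbs estimate \(m_\delta([w]) \asymp \exp(-\delta\dist(o,\rho(w)o))\) --- is exactly \cref{bowenmeasurelemma}, proved there via \cref{horofunctionlemma} and the Busemann/shadow comparison you describe. Your equality-case rigidity --- if the invariant representative of \(\nu_{\Sigma^\omega_R}\) is Gibbs and its projection has dimension \(\delta\), uniqueness of the equilibrium state forces it to be \(m_\delta\), whence the two-sided bound --- is exactly \cref{ergodicdimlemma} combined with the final three lines of the paper's proof. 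The genuine divergence is how the harmonic measure is placed in the Gibbs class: the paper simply cites \cite[Proposition 3.2]{ledrappier} for a continuous positive density \(\varphi\), bounded away from \(0\) and \(\infty\), such that \(\varphi\nu_{\Sigma^\omega_R}\) is Gibbs, whereas you propose to manufacture this from the matrix-product representation of \cref{markovlemma} by a Birkhoff cone-contraction argument. (Note, incidentally, that the paper's proof of this theorem uses no hidden Markov structure at all; \cref{reducedhiddenmarkovtheorem} is exploited only later, in \cref{additivelemma}. Your detour through the fundamental inequality \(h \le \delta\ell\) is harmless but unnecessary: the dichotomy is taken directly over whether \(\dim(\nu) = \delta\).)

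The gap is that the step you yourself identify as the crux is asserted rather than proved, and it is precisely the content of the proposition the paper cites. For the Perron--Frobenius/cone step you need uniform projective contraction --- equivalently, exponential memory loss of the conditional probabilities \(\nu_{\Sigma^\omega_R}([wx])/\nu_{\Sigma^\omega_R}([w])\) --- for products of the letter-indexed nonnegative matrices built from \(g(\cdot,\cdot)\) along admissible reduced words. But edges of the reduced prefix graph exist only when a path condition holds in the weighted prefix graph, so these matrices can have many vanishing entries, and nothing in your sketch verifies that blocks of bounded length contract a common cone (or become strictly positive) uniformly over all admissible words: the hypothesis that \(\supp(\mu)\) generates \(\Sigma^*_R\) as a semigroup gives irreducibility of the chain, not the uniform primitivity of the cocycle your argument requires. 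Since, as you correctly note, sofic images of Markov measures can genuinely fail to be Gibbs, this cannot be waved through; without it neither the H\"older potential for \(\nu_{\Sigma^\omega_R}\) nor the cohomology/boundedness of your functional \(\Phi\) follows. A second unresolved point is that the chain \((x_n)\) has initial law \(f\cdot h\), which is not stationary for \(P(s,s') = h(s)^{-1}g(s,s')h(s')\), so your Shannon--McMillan--Breiman and ergodic-theorem steps, and the passage to a shift-invariant representative, need the very density that Ledrappier's \(\varphi\) provides. Completing your route amounts to re-proving \cite[Proposition 3.2]{ledrappier} in this setting --- feasible, but it is the substantial part of the work; citing it instead collapses your proposal to the paper's proof.
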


\subsection{Coding of the limit set}

We recall basic facts on the symbolic coding of \(\Lambda\), see \cite[Section 9]{lalley2}.

The mapping \(\pi_{\Lambda}:\Sigma^\omega_R \to \Lambda\) defined by
\[\pi_\Lambda(w) = \lim\limits_{n \to +\infty}\rho(w[1:n])o,\]
is a Hölder homeomorphism when \(\Lambda\) is endowed with the visual distance \(\dist_o\) based at \(o\) and \(\Sigma_R^\omega\) with the distance 
\[\dist(w,w') = \exp\left(-\min\lbrace n: w[n] \neq w'[n]\rbrace\right).\]

We define \(f:\Sigma^\omega_R \to \R\) as
\[f(w) = \log|F'(\pi_\Lambda(w))|,\]
where \(F:\Lambda \to \Lambda\) restricted to \(\overline{H_x} \cap \partial \H^2\) is \(\rho(x^{-1})\) for each \(x \in \Sigma\), and the derivative \(F'\) is taken with respect to the constant speed parametrization with respect to visual distance based at \(o\).

\subsection{Thermodynamical formalism}

The mapping \(F\) is conjugate via \(\pi_\Lambda\) to the left shift \(\sigma:\Sigma^\omega_R \to \Sigma^\omega_R\) defined by \(\sigma(w)[i] = w[i+1]\).

The Hausdorff dimension \(\delta = \dim(\Lambda)\) is characterized (see \cite[Section 4]{bowen}) as the unique value of \(s \ge 0\) such that \(P(s) = 0\) where
\[P(s) = \sup\limits_{m} \left\lbrace h(m) - s\int f(w)dm(w)\right\rbrace,\]
and the supremum is taken over all shift-invariant probability measures \(m\), and \(h(m)\) denotes topological entropy of \(m\).  There is a unique shift-invariant probability measure \(m_s\) attaining the supremum above for each $s\geq 0$.

Recall, that the Gibbs measure on \(\Sigma^\omega_R\) corresponding to a Hölder potential \(\psi:\Sigma^\omega_R \to \R\) is the unique shift-invariant probability such that for some \(C > 1\) one has
\[C^{-1}\exp\left(-nP + \sum\limits_{k = 0}^{n-1}\psi(\sigma^k(w'))\right) \le m([w]) \le C\exp\left(-nP + \sum\limits_{k = 0}^{n-1}\psi(\sigma^k(w'))\right),\]
for all \(w \in \Sigma^*_R\) and all \(w' \in [w]\), where \(P\) is the pressure of \(\psi\), which is equal to \(0\) in the case of \(m_\delta\).

Observe that \(m_\delta\) is the Gibbs measure corresponding to the potential \(-\delta f\).  We will show, using standard results in thermodynamical formalism, that \(m_\delta\) maximizes the dimension of \(\pi_*m\) among Gibbs measures.

\begin{lemma}\label{ergodicdimlemma}
If \(m\) is a Gibbs measure on \(\Sigma^\omega_R\) then either \(m = m_\delta\) or \(\dim(\pi_*m) < \dim(\Lambda)\).
\end{lemma}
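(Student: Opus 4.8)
The plan is to reduce the statement to the \emph{volume lemma} for conformal expanding maps, combined with the variational characterization of $\delta$. Writing $\lambda(m) = \int f\,dm$ for the Lyapunov exponent and $\pi_{\Lambda*}m$ for the pushforward $\pi_*m$, the key identity I would establish is
\[\dim(\pi_{\Lambda*}m) = \frac{h(m)}{\lambda(m)},\]
valid for every ergodic shift-invariant probability $m$. Granting this, the dichotomy is immediate. Since $F$ is uniformly expanding on $\Lambda$, the potential $f = \log|F'|$ is bounded below by a positive constant, so $\lambda(m) > 0$; and the variational principle defining $P(\delta)$ gives
\[h(m) - \delta\lambda(m) \le P(\delta) = 0,\]
whence $\dim(\pi_{\Lambda*}m) = h(m)/\lambda(m) \le \delta = \dim(\Lambda)$. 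Because $m_\delta$ is the \emph{unique} measure attaining the supremum defining $P(\delta)$, the equality $h(m) - \delta\lambda(m) = 0$ forces $m = m_\delta$; otherwise the inequality is strict and $\dim(\pi_{\Lambda*}m) < \dim(\Lambda)$.

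To obtain the dimension identity I would first record that a Gibbs measure $m$ for a Hölder potential $\psi$ on the topologically mixing subshift of finite type $\Sigma^\omega_R$ is ergodic, so Birkhoff averages of $f$ and of $\psi$ converge $m$-almost everywhere to $\lambda(m)$ and $\int\psi\,dm$ respectively. Next I would use bounded distortion for the conformal Schottky dynamics to compare cylinders with the geometry of $\Lambda$: writing $S_nf(w') = \sum_{k=0}^{n-1}f(\sigma^k w')$, the chain rule $(F^n)'(\pi_\Lambda(w')) = \exp(S_nf(w'))$ yields
\[\diam\bigl(\pi_\Lambda([w])\bigr) \asymp \exp(-S_nf(w'))\]
for $|w| = n$ and any $w' \in [w]$, with constants independent of $n$. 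Combining this with the Gibbs estimate $m([w]) \asymp \exp(-nP_\psi + S_n\psi(w'))$ and the equilibrium relation $P_\psi = h(m) + \int\psi\,dm$, the local dimension of $\pi_{\Lambda*}m$ at $\pi_\Lambda(w')$, computed along the nested cylinders containing $w'$, equals
\[\lim_{n\to\infty}\frac{-nP_\psi + S_n\psi(w')}{-S_nf(w')} = \frac{P_\psi - \int\psi\,dm}{\lambda(m)} = \frac{h(m)}{\lambda(m)}\]
for $m$-almost every $w'$. The volume lemma asserts that this cylinder-wise limit coincides with the genuine local dimension (over metric balls of radius $r$) and that $\pi_{\Lambda*}m$ is therefore exact dimensional with dimension $h(m)/\lambda(m)$.

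The main obstacle is precisely this last geometric comparison: one must sandwich each ball $B(\pi_\Lambda(w'),r)$ between cylinders of comparable diameter, i.e. control how balls meet the cylinder partition. This is where the Schottky geometry enters — the pairwise disjointness of the closures $\overline{H_x}$ forces a uniform separation between distinct cylinders of a given generation, which together with bounded distortion yields the doubling-type regularity needed to pass from cylinders to balls. The remaining points are routine: that $\Sigma^\omega_R$ is mixing (from any letter one may move to any letter other than its inverse, and $|\Sigma| = 4$), so that Gibbs measures are ergodic; and that $F = \rho(x^{-1})$ on $\overline{H_x}$ is expanding, so that $f > 0$ and $\lambda(m) > 0$. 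With these in place the proof is the two-line argument of the first paragraph.
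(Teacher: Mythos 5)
Your proposal is correct and takes essentially the same route as the paper: the identity \(\dim(\pi_*m) = h(m)/\int f\,dm\) for ergodic (in particular Gibbs) measures, followed by the variational inequality \(h(m) - \delta\int f\,dm \le P(\delta) = 0\) with strictness forced for \(m \neq m_\delta\) by uniqueness of the equilibrium state. The only difference is that the paper simply cites Theorem 8.1.4 of Urba\'nski for the exact dimensionality formula, whereas you sketch its standard proof (Gibbs estimates, Birkhoff averages, bounded distortion, and the cylinder-to-ball comparison), correctly flagging that last geometric comparison as the technical crux.
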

\begin{proof}
   From \cite[Theorem 8.1.4]{urbanksi} one has that \(\pi_*m\) is exact dimensional with dimension
   \[\dim(\pi_*m) = \frac{h(m)}{\int f(w)dm(w)}.\]

   Since \(m_\delta\) is the unique shift-invariant measure attaining the supremum \(P(\delta) = 0\), for all \(m \neq m_\delta\) one has
   \[h(m) - \delta \int f(w) dm(w) < P(\delta) = 0.\]
   Hence, \(\dim(\pi_*m) < \delta\) as claimed.
\end{proof}

For \(\xi \in \partial \H^2\) we denote by \(b_\xi:\H^2 \times \H^2 \to \R\) the Busemann function defined by
\[b_\xi(x,y) = \lim\limits_{t \to +\infty}\dist(\alpha(t),x) - \dist(\alpha(t),y),\]
where \(\alpha:[0,+\infty) \to \H^2\) is any geodesic ray ending at \(\xi\) parametrized by arclength (see \cite[Theorem 1.18]{dalbo}).

Directly from the definition it follows that \(b_\xi(x,z) = b_\xi(x,y) + b_\xi(y,z)\) for all \(x,y,z \in \H^2\).  Furthermore one has \(|b_\xi(x,y)| \le \dist(x,y)\).

We will need the following additional property for what follows:
\begin{lemma}\label{horofunctionlemma}
For all \(g \in G\) one has \(|(g^{-1})'(\xi)| =\exp(b_\xi(o,g o))\) where \(g'\) is the derivative with respect to the constant speed parametrization with respect to the visual distance on \(\partial \H^2\) based at \(o\).
\end{lemma}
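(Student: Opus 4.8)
\subsection*{Proof proposal for Lemma \ref{horofunctionlemma}}

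The identity is intrinsic (both sides are unaffected by passing to another model of \(\H^2\)), so the plan is to combine two ingredients: the \(G\)-equivariance of the family of visual distances, and a change-of-basepoint formula for the conformal factor of the visual distance. First I would record that every \(g \in G\) acts as an \emph{isometry} from \((\partial\H^2,\dist_o)\) to \((\partial\H^2,\dist_{go})\), i.e. \(\dist_{go}(g\xi,g\eta) = \dist_o(\xi,\eta)\) for all \(\xi,\eta\). This is immediate from the fact that \(g\) maps geodesics to geodesics and that the visual distance based at a point is defined intrinsically from the geometry seen from that point. Consequently the derivative of \(g\) computed with \(\dist_o\) on \emph{both} source and target factors through this isometry, and equals the local factor comparing \(\dist_{o}\) with \(\dist_{go}\) at the image point \(g\xi\); that is,
\[|g'(\xi)| = \frac{d\dist_o}{d\dist_{go}}(g\xi).\]

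The main step is therefore the change-of-basepoint formula
\[\frac{d\dist_{o'}}{d\dist_o}(\eta) = \exp\left(b_\eta(o,o')\right)\]
for the local conformal factor between the visual distances based at \(o\) and \(o'\). I would establish this by an explicit computation in the upper half-plane model, where the visual density based at \(i\) is \(|dx|/(1+x^2)\) and the Busemann functions at finite boundary points are given by the usual horocycle expression; the sign is pinned down by the observation that as \(o'\) moves along the geodesic ray toward \(\eta\) one has \(b_\eta(o,o') > 0\) while the visual distance near \(\eta\) genuinely expands. This model-dependent Poisson-kernel identity is the one computation I do not expect to avoid, and it is the main obstacle of the argument.

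Combining the two ingredients, and using \(\frac{d\dist_o}{d\dist_{go}}(g\xi) = \exp\left(-b_{g\xi}(o,go)\right) = \exp\left(b_{g\xi}(go,o)\right)\), I obtain \(|g'(\xi)| = \exp\left(b_{g\xi}(go,o)\right)\). Applying this with \(g^{-1}\) in place of \(g\) gives
\[|(g^{-1})'(\xi)| = \exp\left(b_{g^{-1}\xi}(g^{-1}o,o)\right).\]
Finally I would invoke the equivariance of the Busemann cocycle, \(b_{h\eta}(hx,hy) = b_\eta(x,y)\), which follows at once from \(\dist(hx,hy) = \dist(x,y)\). Applying it with \(h = g\), \(\eta = g^{-1}\xi\), \(x = g^{-1}o\), and \(y = o\) rewrites the exponent as \(b_\xi(o,go)\), yielding \(|(g^{-1})'(\xi)| = \exp\left(b_\xi(o,go)\right)\), as claimed.
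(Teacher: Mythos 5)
Your proposal is correct, but it is organized quite differently from the paper's proof. The paper works entirely in the Poincar\'e disk model: it writes \(g\) explicitly as a M\"obius transformation \(z \mapsto (az+b)/(\overline{b}z+\overline{a})\), computes \(|(g^{-1})'(\xi)| = 1/|a-\overline{b}\xi|^2\) directly, quotes the Bridson--Haefliger formula \(\exp(b_\xi(o,go)) = (1-|g(0)|^2)/|g(0)-\xi|^2\), and checks the two expressions agree via the algebraic identity \(|a-\overline{b}\xi| = |b-\overline{a}\xi|\) valid for \(|\xi|=1\). You instead factor the statement through structural identities: the equivariance \(\dist_{go}(g\xi,g\eta)=\dist_o(\xi,\eta)\) of visual metrics, the change-of-basepoint conformal factor \(\tfrac{d\dist_{o'}}{d\dist_o}(\eta)=\exp(b_\eta(o,o'))\), and the Busemann cocycle equivariance \(b_{h\eta}(hx,hy)=b_\eta(x,y)\); your sign bookkeeping through these steps is accurate and does recover exactly \(|(g^{-1})'(\xi)|=\exp(b_\xi(o,go))\). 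The trade-off: the paper's computation is shorter and self-contained given the reference, while your route isolates the single analytic input (the Poisson-kernel identification of \(\exp(b_\eta(o,o'))\)) from the soft equivariance reasoning, makes the sign conventions transparent, and would survive in settings where no M\"obius formula is available. One remark: the "main obstacle" you identify is avoidable. With the convention \(b_\xi(x,y)=\lim_t \dist(\alpha(t),x)-\dist(\alpha(t),y)\), the Gromov product satisfies \((\xi|\eta)_{o'} = (\xi|\eta)_o + \tfrac{1}{2}\bigl(b_\xi(o',o)+b_\eta(o',o)\bigr)\) by a direct limit computation, and since the visual distance agrees to first order with \(e^{-(\xi|\eta)_o}\) as \(\eta\to\xi\) (arclength, chord, and Bourdon normalizations coincide infinitesimally), the change-of-basepoint formula follows with no model computation at all, making your argument fully synthetic. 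You should also say a word justifying that first-order agreement, since the paper's derivative is taken in the arclength parametrization; it is a one-line check but it is where your metric derivative and the paper's \(|g'|\) are identified.
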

\begin{proof}
The proof is by direct calculation in the Poincaré disk model where \(\H^2\) is identified with \(\D = \lbrace z \in \C: |z| < 1\rbrace\) and \(\partial \H^2\) with the boundary circle \(\partial \D\).  Setting \(o = 0\), the parametrization of \(\partial \D\) is by Euclidean arclength.

We fix an orientation preserving isometry of \(g\) which in this model is of the form
\[g(z) = \frac{az + b}{\overline{b}z + \overline{a}},\]
with \(|a|^2 - |b|^2 = -1\).   By direct calculation
\[g^{-1}(z) = \frac{\overline{a}z - b}{-\overline{b}z + a}.\]

We calculate \(|(g^{-1})'(\xi)| = 1/|a -\overline{b}\xi|^2\) while one has (see \cite[pg. 273]{bridson-haefliger} and \(b_{\xi,x}\) defined there corresponds to \(-b_\xi(o,x)\) in our notation)
\[\exp(b_\xi(o,g(o))) = \frac{1 - |g(0)|^2}{|g(0)-\xi|^2} = \frac{1 - |b|^2/|\overline{a}|^2}{|b/\overline{a} - \xi|^2} = 1/|b -\overline{a}\xi|^2.\]

Since \(|\xi| = 1\) we have
\[|a-\overline{b}\xi| = |\overline{\overline{b}\xi-a}| = |b\overline{\xi}-\overline{a}| = |\overline{\xi}(b - \overline{a}\xi)| = |b-\overline{a}\xi|. \qedhere\]
\end{proof}

We now establish the estimates for \(m_\delta\) needed for Theorem \ref{dimdropalternativetheorem}.

\begin{lemma}\label{bowenmeasurelemma}
  There exists \(C > 1\) such that
  \[C^{-1}\exp(-\delta \dist(o,\rho(w)o)) \le m_\delta([w]) \le C\exp(-\delta\dist(o,\rho(w)o)),\]
  for all \(w \in \Sigma^*_R\).
\end{lemma}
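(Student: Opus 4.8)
The plan is to combine the Gibbs description of $m_\delta$ recalled above with Lemma \ref{horofunctionlemma}, reducing the claim to a purely geometric estimate in the hyperbolic plane. Since $m_\delta$ is the equilibrium state of the potential $-\delta f$ and its pressure vanishes, the Gibbs inequalities give, for every $w \in \Sigma^*_R$ with $|w| = n \ge 1$ and every $w' \in [w]$,
\[C^{-1}\exp\Big(-\delta\sum_{k=0}^{n-1}f(\sigma^k(w'))\Big)\le m_\delta([w])\le C\exp\Big(-\delta\sum_{k=0}^{n-1}f(\sigma^k(w'))\Big).\]
(The case $w = \varepsilon$ is trivial, as $m_\delta([\varepsilon]) = 1 = \exp(-\delta\dist(o,o))$.) So it suffices to identify the Birkhoff sum of $f$ with $\dist(o,\rho(w)o)$ up to a bounded additive error, uniformly in $w$ and $w'$.

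First I would evaluate the Birkhoff sum. Since $\sigma^k(w')$ has first letter $w'[k+1]$, its image $\pi_\Lambda(\sigma^k(w'))$ lies in $\overline{H_{w'[k+1]}}\cap\partial\H^2$, so $F = \rho(w'[k+1]^{-1})$ near this point; moreover $\pi_\Lambda(\sigma^k(w')) = F^k(\pi_\Lambda(w'))$ by conjugacy. The chain rule then telescopes the sum into a single derivative,
\[\sum_{k=0}^{n-1}f(\sigma^k(w')) = \log\big|(F^n)'(\pi_\Lambda(w'))\big| = \log\big|(\rho(w)^{-1})'(\pi_\Lambda(w'))\big|,\]
using that $F^n$ agrees with the isometry $\rho(w'[n]^{-1})\cdots\rho(w'[1]^{-1}) = \rho(w)^{-1}$ on a neighbourhood of $\pi_\Lambda(w')$. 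By Lemma \ref{horofunctionlemma} applied to $g = \rho(w)$, this equals the Busemann value $b_{\pi_\Lambda(w')}(o,\rho(w)o)$.

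Next I would compare this with the distance. Writing $\xi = \pi_\Lambda(w')$ and $\eta = \pi_\Lambda(\sigma^n(w'))$, the identity $\xi = \rho(w)\eta$ together with the isometry invariance $b_{g\xi}(gx,gy)=b_\xi(x,y)$ (immediate from the definition) gives $b_\xi(o,\rho(w)o) = b_\eta(\rho(w)^{-1}o,o)$. Expanding the definition of the Busemann function and recognising a Gromov product based at $o$, one finds
\[\dist(o,\rho(w)o) - b_{\pi_\Lambda(w')}(o,\rho(w)o) = 2\,(\rho(w)^{-1}o \mid \eta)_o.\]
This quantity is non-negative (consistent with $|b_\xi|\le\dist$), so only a uniform upper bound remains.

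The hard part is to bound this Gromov product uniformly, and this is where the Schottky ping-pong geometry enters. By the defining inclusions $\rho(x)H_y\subset H_x$, the point $\rho(w)^{-1}o = \rho(w^{-1})o$ lies in $H_{w[n]^{-1}}$, since $w^{-1}$ is reduced with first letter $w[n]^{-1}$; meanwhile $\eta\in\overline{H_{w'[n+1]}}$, and reducedness of $w'$ forces $w'[n+1]\ne w[n]^{-1}$, so these half-planes are distinct. Working in the Poincaré disk model with $o$ at the centre (as in the proof of Lemma \ref{horofunctionlemma}), the radial projection from $o$ carries each $\overline{H_x}$ onto the boundary arc $\overline{H_x}\cap\partial\H^2$, and these arcs are pairwise disjoint by hypothesis, hence separated by a positive angular gap $\theta_0$ that is uniform over the finitely many half-planes. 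Consequently the rays $[o,\rho(w)^{-1}o)$ and $[o,\eta)$ point into disjoint arcs and make an angle at least $\theta_0$, so their fellow-travelling length, and therefore $(\rho(w)^{-1}o \mid \eta)_o$, is bounded above by a constant $M = M(\theta_0)$ depending only on the configuration of $\{H_x\}$ and on $o$. Feeding $0 \le 2(\rho(w)^{-1}o \mid \eta)_o \le M$ back into the Gibbs inequalities and absorbing the factor $e^{\delta M}$ into the constant yields the lemma.
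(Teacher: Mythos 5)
Your proposal is correct and follows essentially the same route as the paper: the Gibbs property of \(m_\delta\) with potential \(-\delta f\) and zero pressure, the chain-rule telescoping of the Birkhoff sum into \(\log|(\rho(w)^{-1})'(\pi_\Lambda(w'))|\), and Lemma \ref{horofunctionlemma} converting this into the Busemann value \(b_\xi(o,\rho(w)o)\). The only difference is in the final geometric comparison, and it is cosmetic: where the paper bounds \(|b_\xi(o,\rho(w)o)-\dist(o,\rho(w)o)|\le 2R\) by noting the ray \([o,\xi)\) passes within a uniform distance \(R\) of \(\rho(w)o\), you write the defect as the Gromov product \(2(\rho(w)^{-1}o \mid \eta)_o\) and bound it via the uniform angular separation at \(o\) of the disjoint half-planes --- the same ping-pong geometry in an equivalent formulation (your induction showing \(\rho(w^{-1})o\in H_{w[n]^{-1}}\) technically needs the base case \(\rho(x)o\in H_x\), or to start at \(|w|\ge 2\) and absorb the finitely many short words into the constant, but this is a negligible adjustment).
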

\begin{proof}
From the Gibbs property of \(m_\delta\) we obtain
\[m_\delta([w]) \asymp |(F^{|w|})'(\pi_{\Lambda}(\tilde{w}))|^{-\delta} = |(\rho(w)^{-1})'\xi|^{-\delta}\]
where \(F^k\) denotes the \(k\)-th iterate of \(F\), \(\asymp\) means up to a multiplicative constant independent of \(w\), \(\tilde{w} \in [w]\), and \(\xi = \pi_\Lambda(\tilde{w})\).

By Lemma \ref{horofunctionlemma} we have
 \[|(\rho(w)^{-1})'\xi|^{-\delta} = \exp(-\delta b_\xi(o,\rho(w)o)).\]

Given \(x \in \H^2\) and \(r > 0\) we will use \(B(x,r) = \lbrace y: \dist(x,y) < r\rbrace\) to denote the open ball of radius \(r\) centered at \(x\) for the hyperbolic metric.    

We will now work with the Poincaré disk model of \(\H^2\) setting \(o = 0\), we will need the following two facts (see \cite[Chapter 4]{anderson}):
\begin{enumerate}
    \item The hyperbolic open ball \(B(o,r)\) is an Euclidean open disk centered at \(0\) for all \(r > 0\).
    \item Given \(x,y \in \Sigma\) the complete geodesics joining \(H_x\) and \(H_y\) are either Eucliean diameters of the disk, or Euclidean circle arcs perpendicular to the boundary circle \(S^1\).
\end{enumerate}   

Since \(\overline{H_x}\cap S^1\) and \(\overline{H_y}\cap \R\) are disjoint closed arcs in the boundary circle \(S^1\), we obtain that if \(R > 0\) is sufficiently large then all geodesics joining \(H_x\) to \(H_y\) for distinct \(x,y \in \Sigma\) intersect \(B(o,R)\).    

We now consider the geodesic ray \([o,\xi)\) from \(o\) to \(\xi\) and notice that \(\rho(w)^{-1}[o,\xi) = [\rho(\omega)^{-1}o, \rho(w)^{-1}\xi)\) joins a point in \(H_{w[n]^{-1}}\) to a point in \(H_{\tilde{w}[n+1]}\).  Since \(\tilde{w} \in \Sigma^\omega_R\) we have \(w[n]^{-1} \neq \tilde{w}[n+1]\) and therefore this geodesic intersects \(B(o,R)\).

Applying the isometry \(\rho(w)\) we obtain that the geodesic ray \([o,\xi)\) intersects \(B(\rho(w)o,R)\).  It follows that
\[|b_\xi(o,\rho(w)o) - \dist(o,\rho(w)o)| \le 2R.\qedhere\]
\end{proof}

\subsection{Proof of Theorem \ref{dimdropalternativetheorem}}

From \cite[Proposition 3.2]{ledrappier} there exists a continuous positive density \(\varphi\) such that \(\varphi\nu_{\Sigma^\omega_R}\) is a Gibbs measure.

Assume that \(\dim(\nu) = \dim(\Lambda)\).  Since \(\nu = \pi_*\nu_{\Sigma^\omega_R}\) from Lemma \ref{ergodicdimlemma} it follows that \(\varphi\nu_{\Sigma^\omega_R} = m_\delta\).  In particular, since \(\varphi\) is bounded away from zero and infinity there exists \(C' > 1\) such that
\[(C')^{-1}m_{\delta}([w]) \le \nu_{\Sigma^\omega_R}([w]) \le C'm_{\delta}([w]),\]
for all reduced words \(w\).  Combining this with Lemma \ref{bowenmeasurelemma} concludes the proof.

\section{Proof of Theorem \ref{maintheorem}}

We denote the translation length of an isometry \(g\) of \(\H^2\) by
\[\ell(g) = \lim\limits_{n \to +\infty}\frac{1}{n}\dist(o,g^n o).\]

We say a word \(w \in \Sigma^*_R\) is cyclically reduced if its first and last letters are not inverses.

\begin{lemma}\label{cyclicallyreducedlemma}
  Either \(\dim(\nu) < \dim(\Lambda)\) or for all cyclically reduced \(w \in \Sigma^*_R\) one has
  \[\lim\limits_{n \to +\infty}\left(\nu_{\Sigma^\omega_R}([w^n])\right)^{\frac{1}{n}} = \exp(-\delta \ell(\rho(w))).\]
\end{lemma}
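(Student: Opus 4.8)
The plan is to deduce the statement directly from the dichotomy in Theorem \ref{dimdropalternativetheorem}, reading it as a limiting (exponential growth rate) version of that estimate. If the first alternative $\dim(\nu) < \dim(\Lambda)$ holds, then the conclusion of the lemma is immediate, so I would assume instead that the second alternative holds: there is a constant $C > 1$ with
\[C^{-1}\exp(-\delta\dist(o,\rho(w')o)) \le \nu_{\Sigma^\omega_R}([w']) \le C\exp(-\delta\dist(o,\rho(w')o))\]
for every finite reduced word $w'$.

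First I would record that because $w$ is cyclically reduced its first and last letters are not inverses, so the $n$-fold concatenation $w^n$ contains no cancellations and is itself reduced; hence $\overline{w^n} = w^n$, the cylinder $[w^n]$ is a genuine cylinder in $\Sigma^\omega_R$, and $\rho(w^n) = \rho(w)^n$ since $\rho$ is a homomorphism. Applying the two-sided estimate above with $w' = w^n$ and taking $n$-th roots yields
\[C^{-1/n}\exp\left(-\tfrac{\delta}{n}\dist(o,\rho(w)^n o)\right) \le \left(\nu_{\Sigma^\omega_R}([w^n])\right)^{1/n} \le C^{1/n}\exp\left(-\tfrac{\delta}{n}\dist(o,\rho(w)^n o)\right).\]

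Finally I would let $n \to +\infty$. The prefactors $C^{\pm 1/n}$ converge to $1$, while by the very definition of the translation length $\frac{1}{n}\dist(o,\rho(w)^n o) \to \ell(\rho(w))$; a squeeze then gives the asserted limit $\exp(-\delta\ell(\rho(w)))$. The substance of the lemma is entirely contained in Theorem \ref{dimdropalternativetheorem}, so I do not expect a genuine obstacle here; the only points needing care are the elementary bookkeeping that a cyclically reduced word stays reduced under concatenation (so that the estimate applies to $w^n$ and $\rho(w^n) = \rho(w)^n$) and the observation that the per-letter normalization $\frac{1}{n}\dist(o,\rho(w)^n o)$ is exactly the quantity whose limit defines $\ell(\rho(w))$.
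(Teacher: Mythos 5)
Your proposal is correct and is exactly the argument the paper intends: the paper's proof of this lemma is the single line ``direct corollary of Theorem \ref{dimdropalternativetheorem},'' and your write-up merely fills in the implicit bookkeeping (that a cyclically reduced $w$ makes $w^n$ reduced, and that the $n$-th root of the two-sided Gibbs-type estimate converges by the definition of translation length). No discrepancy with the paper's route; your version is simply the unpacked form of it.
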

\begin{proof}
  This is a direct corollary of Theorem \ref{dimdropalternativetheorem}.
\end{proof}

\begin{lemma}\label{additivelemma}
  Either \(\dim(\nu) < \dim(\Lambda)\) or for all cyclically reduced \(aw_1, aw_2 \in \Sigma^*_R\) one has
  \[\ell(\rho(aw_1aw_2)) = \ell(\rho(aw_1)) + \ell(\rho(aw_2)).\]
\end{lemma}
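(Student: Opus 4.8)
The plan is to assume $\dim(\nu) = \dim(\Lambda)$ (otherwise the first alternative holds and we are done) and to extract the additivity of translation length from the product structure that the single‑$a$ hypothesis forces on the harmonic measure. The starting point is Lemma \ref{markovlemma} together with the identity $r[k] = \pi(x_k)$ from Theorem \ref{reducedhiddenmarkovtheorem}, which give, for every reduced word $w$ of length $n$,
\[
\nu_{\Sigma^\omega_R}([w]) = \sum_{\substack{s_1,\ldots,s_n \in S\\ \pi(s_k)=w[k]}} f(s_1)\,g(s_1,s_2)\cdots g(s_{n-1},s_n)\,h(s_n).
\]
The hypothesis of Theorem \ref{maintheorem} says there is a unique state $s_a \in S$ with $\pi(s_a) = a$; hence in this sum every index $k$ with $w[k] = a$ forces $s_k = s_a$. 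This is a renewal at each occurrence of the letter $a$, and it is the one place where the hypothesis enters.

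First I would record the factorization it produces. For a reduced word $c$ beginning and ending with $a$ and containing no other $a$, set
\[
G(c) = \sum_{\substack{s_1 = s_a,\,\ldots,\,s_{|c|}=s_a\\ \pi(s_l)=c[l]}} \prod_{l=1}^{|c|-1} g(s_l,s_{l+1}),
\]
and define $H$ analogously for a suffix word starting with $a$ and containing no further $a$ (carrying the terminal $h$ weight). Splitting the sum above at the forced states $s_a$, any reduced $w$ beginning with $a$ whose $a$'s sit at positions $1 = j_1 < \cdots < j_m$ satisfies
\[
\nu_{\Sigma^\omega_R}([w]) = f(s_a)\,\Big(\prod_{i=1}^{m-1} G(w[j_i:j_{i+1}])\Big)\,H(w[j_m:|w|]).
\]
Applying this to the powers of a cyclically reduced $u = aw_1$, note that in $u^\infty$ every copy of the block $aw_1$ is immediately followed by an $a$, so the segments between consecutive $a$'s within a period are the same words in every period. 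Writing $A(u)$ for the product of the $G$‑factors over one period, the formula gives $\nu_{\Sigma^\omega_R}([u^n]) = C_u\,A(u)^{\,n-1}$ with $C_u, A(u) > 0$ (positivity because $\supp(\mu)$ generates the semi‑group, so each $\nu_{\Sigma^\omega_R}([u^n]) > 0$). Taking $n$‑th roots yields $\lim_n \nu_{\Sigma^\omega_R}([u^n])^{1/n} = A(u)$, and Lemma \ref{cyclicallyreducedlemma} identifies $A(u) = \exp(-\delta\,\ell(\rho(u)))$.

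The additivity is then a matching of segments. For cyclically reduced $u = aw_1$ and $v = aw_2$ the product $uv = aw_1aw_2$ is again cyclically reduced, and in $(uv)^\infty$ the block $aw_1$ is still followed by an $a$ (the leading letter of $aw_2$) while $aw_2$ is followed by an $a$ (the leading letter of the next $aw_1$). Consequently the per‑period segments of $(uv)^\infty$ are exactly those of $u^\infty$ followed by those of $v^\infty$, including the two transition segments running from a block's final $a$ to the next block's initial $a$, which are literally the same words in both settings. Hence $A(uv) = A(u)\,A(v)$, and comparing with $A(\cdot) = \exp(-\delta\,\ell(\rho(\cdot)))$ and using $\delta = \dim(\Lambda) > 0$ gives $\ell(\rho(uv)) = \ell(\rho(u)) + \ell(\rho(v))$.

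The main obstacle is making the renewal factorization rigorous: one must check that the final segment of a block is the same word in $u^\infty$, in $v^\infty$, and in $(uv)^\infty$, which is precisely where cyclic reducedness and the common initial letter $a$ are used. This is also where the single‑$a$‑state hypothesis is indispensable: with several states projecting to $a$ the segment weights would become non‑commuting transfer matrices indexed by the entry and exit $a$‑states, the scalar factorization $\nu_{\Sigma^\omega_R}([u^n]) = C_u A(u)^{n-1}$ would be replaced by a matrix power whose growth rate is a leading eigenvalue, and the clean multiplicativity $A(uv) = A(u)A(v)$ would be lost.
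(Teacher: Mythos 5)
Your proposal is correct and follows essentially the same route as the paper: both use the Markov representation of Lemma \ref{markovlemma}, exploit the hypothesis that a unique state \(s_a\) projects to \(a\) to factor \(\nu_{\Sigma^\omega_R}([(aw)^n])\) as \(C\cdot A^{n-1}\) with the per-period factor a sum of path weights between forced occurrences of \(s_a\), identify that factor with \(\exp(-\delta\,\ell(\rho(aw)))\) via Lemma \ref{cyclicallyreducedlemma}, and then split \(aw_1aw_2\) at the middle forced \(s_a\) to get multiplicativity. The only cosmetic difference is that you refine the paper's per-period block sum \(\sum_{\alpha\in A(awa)}g(\alpha)\) into a product of segment weights between consecutive \(a\)'s, and you make explicit the positivity of the factors and the closing remark on why multiple \(a\)-states would turn the scalars into transfer matrices — both sound, neither changing the argument.
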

\begin{proof}
  
  Given \(w \in \Sigma^*_R\) we denote by \(A(w)\) the set of \(w\)-paths, which is to say \(\alpha \in A(w)\) if \(\alpha =  (s_0 \rightarrow \cdots \rightarrow s_n)\) is a path in the reduced prefix graph such that \(\pi(s_0)\cdots \pi(s_{n}) = w\).   For such paths we write \(g(\alpha) = g(s_0,s_1)\cdots g(s_{n-1},s_n)\), and \(g_+(\alpha) = g(\alpha)h(s_n)\).  
  
  By hypothesis there is a unique element \(s_a \in \prefix(\supp(\mu))\) such that \(\pi(s_a) = a\).   This implies that if \(\alpha \in A(w)\) and \(w[i] = a\) then the \(i\)-th vertex visited by \(\alpha\) is \(s_a\).

  Using this observation and Lemma \ref{markovlemma} we calculate for \(aw\) cyclically reduced 
  \begin{align*}
 \nu_{\Sigma^\omega_R}([(aw)^n]) = f(s_a)\left(\sum\limits_{\alpha \in A(awa)}g(\alpha)\right)^{n-1}\sum\limits_{\alpha \in A(aw)}g_+(\alpha).
  \end{align*}
  
  By Lemma \ref{cyclicallyreducedlemma} this implies
  \[\exp(-\delta \ell(\rho(aw))) = \sum\limits_{\alpha \in A(awa)}g(\alpha),\]
  whenever \(aw\) is cyclically reduced.
  
  In particular we have
  \begin{align*}\exp(-\delta \ell(\rho(aw_1aw_2))) &= \sum\limits_{\alpha \in A(aw_1aw_2a)}g(\alpha)
  \\ &= \sum\limits_{\alpha \in A(aw_1a)}g(\alpha) \sum\limits_{\alpha \in A(aw_2a)}g(\alpha)
  \\ &= \exp(-\delta\ell(\rho(aw_1)))\exp(-\delta\ell(\rho(aw_2))),
  \end{align*}
  which concludes the proof.
\end{proof}

If it were the case that \(\dim(\nu) = \dim(\Lambda)\), by Lemma \ref{additivelemma} setting \(w = aa\) and \(w' = ab\) one would have \(\ell(\rho(ww')) = \ell(\rho(w)) + \ell(\rho(w'))\).

Notice that \(\rho(w)\) is a translation along a geodesic beginning in \(H_{a^{-1}}\) and ending in \(H_a\) while the axis of \(\rho(w')\) begins in \(H_{b^{-1}}\) and ends in \(H_a\).  

If the axes intersect then \(\ell(\rho(ww')) < \ell(\rho(w)) + \ell(\rho(w'))\).   If they do not, they must be at a positive distance, and since both translations are toward the endpoint in \(H_a\) we would have \(\ell(\rho(ww')) > \ell(\rho(w)) + \ell(\rho(w'))\) (see for example \cite[Theorem 2.1]{furman}).   

In both cases \(\ell(\rho(ww')) \neq \ell(\rho(w)) + \ell(\rho(w'))\), from which we deduce that \(\dim(\nu) < \dim(\Lambda)\).

\section{Acknowledgments}
The authors would like to thank Françoise Dal'bo, Sébastien Gouëzel and François Ledrappier for several helpful conversations. And Steve Lalley, Jean Mairesse, Pablo Shmerkin, and Wolfgang Woess for directing us towards relevant literature. The authors also thank the anonymous referee for the careful reading of the manuscript and the constructive remarks.
\bibliographystyle{alpha}
\bibliography{biblio}

\end{document}